\definecolor{astral}{RGB}{46,116,181}
\newtheorem{theorem}{Theorem}[section]
\newtheorem{lemma}[theorem]{Lemma}
\newtheorem{example}[theorem]{Example}
\newtheorem{remark}[theorem]{Remark}
\definecolor{lime}{HTML}{A6CE39}
\definecolor{lightblue}{rgb}{0.0, 0.0, 0.5}
\DeclareRobustCommand{\orcidicon}{%
	\begin{tikzpicture}
	\draw[lime, fill=lime] (0,0)
	circle [radius=0.16]
	node[white] {{\fontfamily{qag}\selectfont \tiny ID}};
	\draw[white, fill=white] (-0.0625,0.095)
	circle [radius=0.007];
	\end{tikzpicture}
	\hspace{-2mm}
}
\xdef\csname orcid\x\endcsname{\noexpand\href{https://orcid.org/\csname orcidauthor\x\endcsname}{\noexpand\orcidicon}}
\newcommand\norm[1]{\left\lVert#1\right\rVert}
\newcommand{\mLabel}[1]{\mbox{$\scriptstyle{#1}$}}
\def \diag{\mathrm{diag}}
\def \R{{\mathbb R}}
\def \C{{\mathbb C}}
\def \rank{\mathrm{rank}}
\def \QR{\mathbb{Q}_\mathbb{R}}
\def \i{\textit{\textbf{i}}}
\def \j{\textit{\textbf{j}}}
\def \k{\textit{\textbf{k}}}
\begin{document}
\begin{frontmatter}
\title{Solution Methods and Perturbation Analysis for the Equality-Constrained Total Least Squares Problem over Reduced Biquaternions
}
\author{Neha Bhadala$^\dagger$$^a$}

\address{
 $^{\dagger}$ Department of Computational and Data Sciences, Indian Institute of Science, Bengaluru,  India
\\\textit{E-mail\,$^a$}: \texttt{bhadalaneha@gmail.com}
}

\begin{abstract}
This paper proposes a theoretical framework to address the reduced biquaternion equality-constrained total least squares (RBTLSE) problem. The objective is to find an approximate solution to the system $AX \approx B$, subject to linear constraints $CX = D$, while explicitly accounting for errors in both the coefficient matrix $A$ and the observation matrix $B$. We establish conditions under which real and complex solutions exist and develop corresponding solution methods using real and complex representations of reduced biquaternion matrices. To assess the sensitivity of the solutions to data perturbations, we derive relative normwise condition numbers and provide tight upper bounds on the relative forward errors. These results ensure the computational reliability of the proposed framework. Extensive numerical experiments validate the theoretical findings and demonstrate that the RBTLSE approach significantly outperforms the conventional reduced biquaternion equality-constrained least squares (RBLSE) method, particularly in noisy environments affecting both sides of the system.
\end{abstract}

\begin{keyword}
Reduced biquaternion matrix.  Complex and real representation matrix.  Equality constrained total least squares problem.
\end{keyword}
\end{frontmatter}

\section{Introduction}\label{sec1}

Quaternions, first formulated by W.R. Hamilton in $1843$ \cite{MR237284}, extend complex numbers into a non-commutative algebra and have found broad application in fields like signal analysis, image reconstruction, and neural network models \cite{MR4835709, said2008fast, zeng2016color}. Building upon this structure, Schütte and Wenzel introduced the concept of the reduced biquaternion (RB) in 1990 \cite{schutte1990hypercomplex}. Unlike quaternions, reduced biquaternions satisfy the commutative law of multiplication, which not only simplifies their arithmetic operations but also significantly reduces computational complexity \cite{MR2087700, MR2517402}. These features make RBs particularly suitable for large-scale problems in applied fields such as color image processing, signal processing, and neural networks \cite{gai2021reduced, gai2024regularization, MR3300514, MR2087700}.

Given their algebraic simplicity and strong representational power, reduced biquaternion matrices have received increasing attention in the literature. Recent advances include the exploration of the H-representation of RB matrices for symmetric solutions \cite{han2025h}, singular value decomposition and its applications in image restoration \cite{MR4839554}, and necessary and sufficient conditions for matrix diagonalization \cite{MR4685147}. Further studies have focused on the eigenproblem of RB matrices \cite{MR4649293}, outer inverses \cite{bhadala2024outer}, and L-structured solutions for generalized RB matrix equations \cite{ahmad2025structure}.

In numerical linear algebra, the least squares (LS) formulation serves as a widely used approach for estimating solutions to overdetermined systems of the form $AX \approx B$. A common extension of LS is the equality-constrained least squares (LSE) problem, which incorporates additional linear constraints on the solution. However, a critical assumption in both LS and LSE problems is that the matrix $A$ is error-free. In real-world scenarios, this assumption is often violated due to measurement or modeling noise. The total least squares (TLS) approach was introduced to handle inaccuracies present in both the matrices $A$ and $B$. When equality constraints on solutions are present, the problem becomes an equality-constrained total least squares (TLSE) problem.

In the quaternion domain, several works have addressed LSE problems using decomposition techniques such as quaternion SVD \cite{MR4287902} and QR decomposition \cite{MR4509107}. In \cite{MR3416387}, the authors studied the special solution of quaternion TLS problem. The extension of these problems to the reduced biquaternion setting is relatively recent. The LS and LSE problems for RB matrices have been studied in \cite{ahmad2024solutions, MR4085494, MR4532609}, and the TLS problem in \cite{MR4848406}. However, the TLSE problem for reduced biquaternions remains unexplored in the existing literature. Despite growing interest in reduced biquaternion matrix analysis, no comprehensive study has yet addressed the development of a theoretical and computational framework for finding solution to the reduced biquaternion equality-constrained total least squares (RBTLSE) problem. This gap is significant, as many real-world applications involve noise or uncertainties not only in the observation matrix but also in the coefficient matrix, making such a framework essential for accurate modeling and solution.

In practical scenarios, ensuring the reliability and robustness of solutions is just as important as obtaining them. A key challenge lies in understanding how small perturbations in the input data affect the computed solution. This sensitivity analysis is crucial for evaluating the numerical stability of any proposed method.

Motivated by these research gaps, this paper aims to address the following key questions:
\begin{enumerate}
	\item Can a systematic approach be developed to compute special solutions of the RBTLSE problem?
	\item How sensitive is the RBTLSE solution to perturbations in the input matrices?
\end{enumerate}
To investigate the second question, we analyze the relative normwise condition number, which measures the extent to which a small relative perturbation in the input data can lead to a relative change in the solution. A larger condition number indicates that the problem is ill-conditioned and the solution may be highly sensitive to input errors. Furthermore, we derive tight upper bounds on the relative forward error, providing a theoretical estimate of the deviation between the computed and true solutions in the presence of perturbations. These measures offer valuable insights into the numerical reliability of the RBTLSE solution. Based on this motivation and analysis, the main contributions of this research include:
\begin{enumerate}
    \item Establishing necessary and sufficient conditions for the existence and uniqueness of real and complex solutions to the RBTLSE problem.
    \item Designing a unified and systematic solution framework based on real and complex representations of RB matrices.
    \item Formulating relative normwise condition numbers to rigorously quantify the sensitivity of both real and complex RBTLSE solutions.
    \item Deriving sharp theoretical upper bounds on relative forward errors to ensure computational stability and reliability.
    \item Demonstrating the effectiveness and robustness of the proposed framework through extensive numerical experiments.
\end{enumerate}
The proposed research provides the first unified framework for analyzing and solving the RBTLSE problem. In contrast to the RBLSE approach, the RBTLSE formulation offers enhanced accuracy in scenarios involving perturbations in both the coefficient matrix and the observation matrix. By addressing error sensitivity and structural solution methods, the proposed methodology enhances the current landscape of reduced biquaternion-based computation. It also lays the groundwork for more accurate and robust applications in areas such as color image and signal processing, where reduced biquaternion representations are naturally suited.

The remainder of the paper is structured as follows. Section~\ref{sec2} introduces the necessary notations and preliminaries. In Section~\ref{sec3}, we develop the algebraic approach for computing the real solution of the RBTLSE problem. Section~\ref{sec4} extends this methodology to derive the complex solution. In Section~\ref{sec5}, we perform a detailed perturbation analysis to assess the sensitivity of the solutions. Section~\ref{sec6} presents numerical examples that validate the theoretical results and illustrate the effectiveness of the proposed methods.

\section{Notation and Preliminaries}\label{sec2}
\subsection{Notation}
In this work, we use \( \R \), \( \C \), and \( \QR \) to represent the domains of real, complex, and RB numbers, respectively. The corresponding matrix spaces of dimension \( m \times n \) over these sets are represented by \( \R^{m \times n} \), \( \C^{m \times n} \), and \( \QR^{m \times n} \). We use $\norm{\cdot}_F$ for the Frobenius norm and $\norm{\cdot}_2$ for the spectral norm. Standard matrix operations are indicated as follows: for $A \in \C^{m \times n}$, $A^T$ is the transpose, $A^H$ the conjugate transpose, and $A^{-1}$ the inverse. The row space and null space of \( A \) are denoted by \( \mathcal{R}(A) \) and \( \mathcal{N}(A) \), respectively. The matrix $\Pi_{(d,n)} \in \mathbb{R}^{dn \times dn}$ is defined as $\Pi_{(d,n)} = \sum_{i=1}^{d} \sum_{j=1}^{n} E_{ij} \otimes E_{ij}^T$, where $E_{ij}$ is the elementary matrix with a $1$ at position $(i,j)$. In numerical experiments, we use Matlab's \texttt{rand} for uniform random values and \texttt{randn} for normal random values.
\subsection{Preliminaries}
A reduced biquaternion admits a unique representation in the form:  $\alpha= \alpha_{0}+ \alpha_{1}\i+ \alpha_{2}\j+ \alpha_{3}\k$,
where $\alpha_{0}, \alpha_1, \alpha_2, \alpha_3 \in \R$. The basis elements satisfy the following multiplication rules: $\j^2= 1$, $\i^2=  \k^2= -1$,
$\i\j= \j\i= \k$, $\j\k= \k\j= \i$, $\k\i= \i\k= -\j$. Using this structure, $\alpha$ can also be represented as $\alpha=\beta_1 +\beta_2\j$, where $\beta_1=\alpha_0 + \alpha_1 \i \in \C$ and $\beta_2=\alpha_2 + \alpha_3\i \in \C$. 

The norm of $\alpha$ is given by $\norm{\alpha}= \sqrt{\alpha_0^2+ \alpha_1^2+ \alpha_2^2+ \alpha_3^2}$, which is equivalently expressed as $\sqrt{|\beta_1|^2+|\beta_2|^2}$. For a matrix $P = (p_{ij}) \in \QR^{m \times n}$, the Frobenius norm is characterized according to \cite{MR4137050} as:
\begin{equation}\label{eq2.1}
	\norm{P}_F= \sqrt{\sum_{i= 1}^{m} \sum_{j= 1}^{n} \norm{p_{ij}}^{2}}.
\end{equation}
Let $P=P_0+ P_1\i+ P_2\j+ P_3\k= R_1+R_2\j \in \QR^{m \times n}$, where $P_t \in \R^{m \times n}$ for $t=0,1,2,3$ and $R_t \in \C^{m \times n}$ for $t=1,2$. The matrix $P$ admits two fundamental representations—real and complex—denoted as $P^R$ and $P^C$, respectively. These representations are structured as follows:
\begin{equation}\label{eq2.2}
	P^R= \begin{bmatrix}
		P_0  &  -P_1  &  P_2  &  -P_3 \\
		P_1   &   P_0  &  P_3  &   P_2  \\
		P_2   &  -P_3 & P_0  &  -P_1  \\
		P_3  &  P_2  &  P_1  &  P_0 
	\end{bmatrix}, ~ P^C= \begin{bmatrix}
		R_1 & R_2 \\
		R_2 & R_1 
	\end{bmatrix}.
\end{equation}
Several fundamental properties hold for RB matrices. Given $\zeta \in \R$, $P, Q \in \QR^{m \times n}$, and $T \in \QR^{n \times t}$, we have $P= Q \iff P^R= Q^R$, $(P+Q)^R= P^R+ Q^R$, $(\zeta P)^R= \zeta P^R$, and $(PT)^R = P^R T^R$. Define the transformation matrices:
\begin{equation*}
	\mathcal{K}_m= \begin{bmatrix}
		0      &   -I_m   &  0     & 0    \\
		I_m  &    0        &  0     & 0    \\
		0      &    0        &  0     & -I_m \\
		0      &    0        &  I_m & 0
	\end{bmatrix}, ~
	\mathcal{L}_m= \begin{bmatrix}
		0      &   0      &  I_m   & 0           \\
		0      &    0     &  0       &   I_m     \\
		I_m  &    0     &  0       & 0           \\
		0      &    I_m &  0       & 0
	\end{bmatrix}, ~
	\mathcal{M}_m= \begin{bmatrix}
		0      &   0      &  0       & -I_m           \\
		0      &    0     &  I_m    &  0    \\
		0      &  -I_m  &  0       & 0           \\
		I_m   &    0     &  0       & 0
	\end{bmatrix}.
\end{equation*}
Define $P_c^R= [P_0^T, P_1^T, P_2^T, P_3^T]^T$ as the leading block column of the matrix $P^R$. Then, the matrix $P^R$ can be expressed as
\begin{equation}\label{eq2.3}
	P^R= [P_c^R, \mathcal{K}_m P_c^R, \mathcal{L}_m P_c^R, \mathcal{M}_m P_c^R].  
\end{equation}

For $\zeta \in \C$, $P, Q \in \QR^{m \times n}$, and $T \in \QR^{n \times t}$, we have $P= Q \iff P^C= Q^C$, $(P+Q)^C= P^C+ Q^C$, $(\zeta P)^C= \zeta P^C$, and $(PT)^C = P^C T^C$. We define
\begin{equation*}
	\mathcal{N}_m= \begin{bmatrix}
		0 & I_m \\
		I_m & 0
	\end{bmatrix}.
\end{equation*}
Define $P_c^C= [R_1^T, R_2^T]^T$ as the leading block column of the matrix $P^C$. Then, the matrix $P^C$ can be expressed as
\begin{equation}\label{eq2.4}
	P^C= [P_c^C, \mathcal{N}_m P_c^C].  
\end{equation}
The real and complex representations provide the mathematical foundation for defining the Frobenius norm within the RB matrix framework. As established in fundamental studies \cite{MR4085494, MR4532609}, this formulation maintains coherence with equation \eqref{eq2.1}, ensuring theoretical consistency throughout the analysis. For any matrix $P \in \QR^{m \times n}$, the Frobenius norm can be expressed through either representation via the following relationship:
\begin{equation}\label{eq2.211}
	\norm{P}_F= \frac{1}{2} \norm{P^R}_F = \norm{P^R_c}_F= \frac{1}{\sqrt{2}} \norm{P^C}_F=\norm{P^C_c}_F.
\end{equation}
From equations \eqref{eq2.1}, \eqref{eq2.2}, and \eqref{eq2.211}, we derive the following important lemmas.
\begin{lemma}\label{lem2}
	Let $P= P_0+ P_1\i+ P_2\j+ P_3\k \in \QR^{m \times n}$ and $Q= Q_0+ Q_1\i+ Q_2\j+ Q_3\k \in \QR^{m \times d}$. Then, the following results hold.
	\begin{enumerate}
		 \item[\rm (i)] $\|[P, Q]\|_F= \frac{1}{2}\|[P, Q]^R\|_F$.
		 \item[\rm (ii)] $\|[P, Q]^R\|_F= \|[P^R, Q^R]\|_F$.
		\item[\rm (iii)] $\|[P^R, Q^R]\|_F= 2\|[P_c^R, Q_c^R]\|_F$.
	\end{enumerate}
\end{lemma}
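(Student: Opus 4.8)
The plan is to derive all three identities directly from the representation facts already recorded in \eqref{eq2.211} and \eqref{eq2.3}, using only the elementary observation that the Frobenius norm of a matrix depends solely on the multiset of its entries, equivalently that it is invariant under column permutations and column sign changes. Part (i) is then immediate: it is exactly the first equality in \eqref{eq2.211} applied to the single RB matrix $[P,Q]\in\QR^{m\times(n+d)}$, so nothing beyond quoting \eqref{eq2.211} is needed for it.

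For part (ii) the first step is to record the component form of the concatenation, $[P,Q]=[P_0,Q_0]+[P_1,Q_1]\i+[P_2,Q_2]\j+[P_3,Q_3]\k$, so that by \eqref{eq2.2} the matrix $[P,Q]^R$ is the $4m\times 4(n+d)$ block matrix assembled from the blocks $[P_t,Q_t]$ in the same pattern as in \eqref{eq2.2}. The matrix $[P^R,Q^R]$, by contrast, is the horizontal concatenation of $P^R$ (size $4m\times 4n$) and $Q^R$ (size $4m\times 4d$). Comparing the two, every column of $[P^R,Q^R]$ occurs as a column of $[P,Q]^R$ and conversely; the only difference is the ordering, since in $[P,Q]^R$ the columns coming from $P$ and from $Q$ are interleaved within each of the four block-columns, while in $[P^R,Q^R]$ all columns of $P^R$ precede all columns of $Q^R$. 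Hence $[P^R,Q^R]$ is obtained from $[P,Q]^R$ by a column permutation, and (ii) follows from the orthogonal invariance of $\norm{\cdot}_F$.

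For part (iii) I would apply \eqref{eq2.3} to $[P,Q]$: writing $N:=[P,Q]_c^R$ for its leading block column, we have $[P,Q]^R=[\,N,\ \mathcal{K}_m N,\ \mathcal{L}_m N,\ \mathcal{M}_m N\,]$. Since $\mathcal{K}_m,\mathcal{L}_m,\mathcal{M}_m$ are signed permutation matrices, hence orthogonal, each of the four blocks has the same Frobenius norm as $N$, so $\norm{[P,Q]^R}_F^2=4\norm{N}_F^2$, i.e. $\norm{[P,Q]^R}_F=2\norm{N}_F$. It then remains to identify $N$ with $[P_c^R,Q_c^R]$: stacking $[P_0,Q_0],[P_1,Q_1],[P_2,Q_2],[P_3,Q_3]$ vertically produces exactly the matrix whose first $n$ columns form $P_c^R=[P_0^T,P_1^T,P_2^T,P_3^T]^T$ and whose last $d$ columns form $Q_c^R=[Q_0^T,Q_1^T,Q_2^T,Q_3^T]^T$, giving (iii). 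No genuine obstacle is expected, as the lemma is essentially bookkeeping about block structure; the one point requiring care is making the column-rearrangement arguments in (ii) and (iii) explicit enough that the appeal to orthogonal invariance of $\norm{\cdot}_F$ is unambiguous, in particular tracking the block widths $n$ and $d$ so that the interleaving of the $P$- and $Q$-columns is described correctly. Chaining (i)--(iii) also recovers the $[P,Q]$ analogue of the full chain in \eqref{eq2.211}, a convenient consistency check.
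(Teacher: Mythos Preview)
Your proposal is correct. The paper does not actually give a proof of this lemma; it simply states that the lemma is derived ``from equations \eqref{eq2.1}, \eqref{eq2.2}, and \eqref{eq2.211}'' and leaves the verification to the reader, so your explicit argument via column permutations and the orthogonality of $\mathcal{K}_m,\mathcal{L}_m,\mathcal{M}_m$ is exactly the kind of bookkeeping the paper is tacitly invoking. One minor remark: in your treatment of (iii) you establish $\norm{[P,Q]^R}_F=2\norm{[P_c^R,Q_c^R]}_F$ and then need (ii) to replace $[P,Q]^R$ by $[P^R,Q^R]$; alternatively you could prove (iii) directly from \eqref{eq2.211} applied separately to $P$ and $Q$, since $\norm{[P^R,Q^R]}_F^2=\norm{P^R}_F^2+\norm{Q^R}_F^2=4\norm{P_c^R}_F^2+4\norm{Q_c^R}_F^2=4\norm{[P_c^R,Q_c^R]}_F^2$, which avoids the dependence on (ii).
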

\begin{lemma}\label{lem22}
	Let $P= R_1+ R_2\j \in \QR^{m \times n}$ and $Q= S_1+ S_2\j \in \QR^{m \times d}$. Then, the following results hold.
	\begin{enumerate}
		\item[\rm (i)] $\|[P, Q]\|_F= \frac{1}{\sqrt{2}}\|[P, Q]^C\|_F$.
		\item[\rm (ii)] $\|[P, Q]^C\|_F= \|[P^C, Q^C]\|_F$.
		\item[\rm (iii)] $\|[P^C, Q^C]\|_F= \sqrt{2}\|[P_c^C, Q_c^C]\|_F$.
	\end{enumerate}
\end{lemma}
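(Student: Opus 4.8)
The plan is to treat each of the three identities as a short bookkeeping exercise resting entirely on the norm relations \eqref{eq2.211}, the elementary block identity $\|[X,Y]\|_F^2 = \|X\|_F^2 + \|Y\|_F^2$, and the fact that the matrix $\mathcal{N}_m$ appearing in \eqref{eq2.4} is a permutation matrix and hence orthogonal; no deeper structure is needed. Throughout, I would work with the four complex blocks $R_1,R_2\in\C^{m\times n}$ and $S_1,S_2\in\C^{m\times d}$, keeping track only of the quantities $\|R_1\|_F,\|R_2\|_F,\|S_1\|_F,\|S_2\|_F$.

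For (i), I would observe that $[P,Q]\in\QR^{m\times(n+d)}$ is itself a reduced biquaternion matrix, so (i) is exactly the instance $\|M\|_F=\tfrac{1}{\sqrt2}\|M^C\|_F$ of \eqref{eq2.211} with $M=[P,Q]$. For (ii), the quickest route is to chain (i) with \eqref{eq2.211} and the block splitting of the Frobenius norm:
\[
\|[P,Q]^C\|_F^2 = 2\,\|[P,Q]\|_F^2 = 2\big(\|P\|_F^2+\|Q\|_F^2\big) = \|P^C\|_F^2 + \|Q^C\|_F^2 = \|[P^C,Q^C]\|_F^2 .
\]
As a check on the block layout, one can write $[P,Q]=[R_1,S_1]+[R_2,S_2]\,\j$ and compare
\[
[P,Q]^C = \begin{bmatrix} [R_1,S_1] & [R_2,S_2] \\ {[R_2,S_2]} & [R_1,S_1] \end{bmatrix}
\quad\text{with}\quad
[P^C,Q^C] = \begin{bmatrix} R_1 & R_2 & S_1 & S_2 \\ R_2 & R_1 & S_2 & S_1 \end{bmatrix},
\]
which are permutations of one another's block columns and therefore share the same Frobenius norm.

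For (iii), I would invoke \eqref{eq2.4}: $P^C=[P_c^C,\mathcal{N}_m P_c^C]$ and $Q^C=[Q_c^C,\mathcal{N}_m Q_c^C]$. Since $\mathcal{N}_m$ is orthogonal, $\|\mathcal{N}_m P_c^C\|_F=\|P_c^C\|_F$, so $\|P^C\|_F^2=2\|P_c^C\|_F^2$ and likewise $\|Q^C\|_F^2=2\|Q_c^C\|_F^2$; then
\[
\|[P^C,Q^C]\|_F^2 = \|P^C\|_F^2 + \|Q^C\|_F^2 = 2\|P_c^C\|_F^2 + 2\|Q_c^C\|_F^2 = 2\,\|[P_c^C,Q_c^C]\|_F^2 ,
\]
which gives (iii). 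I do not expect a genuine obstacle here; the only thing that needs care is, in part (ii), that $[P,Q]^C$ and $[P^C,Q^C]$ are \emph{not} the same matrix — the identity holds at the level of Frobenius norms only, via invariance under column permutation — and, more broadly, keeping the block structure of the complex representation straight when expanding the norms.
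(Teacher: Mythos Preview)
Your proposal is correct and follows exactly the route the paper implicitly indicates: the paper does not write out a proof for this lemma but simply states that it is derived from equations \eqref{eq2.1}, \eqref{eq2.2}, and \eqref{eq2.211}, and your argument is precisely the natural unpacking of that derivation via the block splitting $\|[X,Y]\|_F^2=\|X\|_F^2+\|Y\|_F^2$ and the orthogonality of $\mathcal{N}_m$. Your added remark that $[P,Q]^C$ and $[P^C,Q^C]$ differ only by a column permutation is a nice sanity check but not needed for the proof.
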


\section{Computation of Real Solutions to the RBTLSE Problem}\label{sec3}
This section is dedicated to exploring an algebraic framework for computing a real solution to the RBTLSE problem by connecting it to its real counterpart, the real TLSE problem. We begin by formulating the RBTLSE problem mathematically. Subsequently, we explore its equivalence with the multidimensional real TLSE problem, laying the groundwork for deriving an efficient solution strategy. 
Suppose 
\begin{eqnarray}
	A&=& A_0+ A_1\i+ A_2\j+ A_3\k \in  \QR^{m \times n}, ~ B= B_0+ B_1\i+ B_2\j+ B_3\k \in \QR^{m \times d}, \label{eq3.1} \\ 
	C&=& C_0+ C_1\i+ C_2\j+ C_3\k \in  \QR^{p \times n}, ~ D= D_0+ D_1\i+ D_2\j+ D_3\k \in \QR^{p \times d}. \label{eq3.2}
\end{eqnarray}
The RBTLSE problem is defined as follows:
\begin{equation}\label{eq3.3}
	\min_{X,\bar{E},\bar{F}}\norm{[\bar{E}, \bar{F}]}_F ~ \textrm{subject to} ~  (A+\bar{E})X = B+\bar{F}, ~ CX=D. 
\end{equation} 
Any real matrix $X$ satisfying the perturbed system in \eqref{eq3.3} with minimal perturbations is called a real RBTLSE solution.

To facilitate numerical computation using real-valued operations, we associate the RBTLSE problem with the following equivalent real TLSE problem:
\begin{equation}\label{eq3.4}
	\min_{X,\widetilde{E}, \widetilde{F}} \norm{[\widetilde{E}, \widetilde{F}]}_F ~ \textrm{subject to} ~ (A^R_c+\widetilde{E})X=B^R_c+ \widetilde{F}, ~ C^R_cX=D^R_c.
\end{equation}
Any real matrix $X$ satisfying the perturbed system in \eqref{eq3.4} with minimal perturbations is called a real TLSE solution.

Throughout this section, we assume that $m \geq n+d$, and that the matrix $C^R_c$ has full row rank. The following theorem establishes a fundamental equivalence between the RBTLSE and real TLSE problems.
\begin{theorem}\label{thm3.1}
	A matrix $X \in \R^{n \times d}$ is a real RBTLSE solution of \eqref{eq3.3} if and only if it is a real TLSE solution of \eqref{eq3.4}. Furthermore, let $X$ be a real TLSE solution with corresponding minimizing perturbations $\widetilde{E}$ and $\widetilde{F}$ in equation \eqref{eq3.4} given by
	\begin{equation*}
		\widetilde{E} = \begin{bmatrix} E_0^T & E_1^T & E_2^T & E_3^T \end{bmatrix}^T \in \R^{4m \times n}, ~
		\widetilde{F} = \begin{bmatrix} F_0^T & F_1^T & F_2^T & F_3^T \end{bmatrix}^T \in \R^{4m \times d},
	\end{equation*}
where $E_t \in \R^{m \times n}$ and $F_t \in \R^{m \times d}$ for $t=0,1,2,3$.

Then the corresponding minimizing perturbations $\bar{E}$ and $\bar{F}$ in equation \eqref{eq3.3} are given by
\begin{equation*}
	\bar{E} = E_0 + E_1\i + E_2\j + E_3\k \in \QR^{m \times n}, ~
	\bar{F} = F_0 + F_1\i + F_2\j + F_3\k \in \QR^{m \times d}.
\end{equation*}
\end{theorem}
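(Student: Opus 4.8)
The plan is to establish the equivalence by translating between the reduced biquaternion constraint system and its real counterpart using the leading-block-column map $P \mapsto P_c^R$, together with the norm identities in Lemma~\ref{lem2} and equation~\eqref{eq2.211}. First I would observe that the RB matrix equation $(A+\bar E)X = B+\bar F$ with a \emph{real} unknown $X \in \R^{n\times d}$ is equivalent, componentwise in the basis $\{1,\i,\j,\k\}$, to the four real equations $(A_t+E_t)X = B_t + F_t$ for $t=0,1,2,3$; stacking these gives exactly $(A_c^R + \widetilde E)X = B_c^R + \widetilde F$ where $\widetilde E = [E_0^T\ E_1^T\ E_2^T\ E_3^T]^T$ and likewise for $\widetilde F$. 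Here the key point is that $X$ being real commutes past $\j$ and $\k$ without mixing components, so the block structure of $A^R$ in \eqref{eq2.2} collapses to just its first block column $A_c^R$ acting on $X$. The same reasoning applied to $CX=D$ yields $C_c^R X = D_c^R$. Thus the feasible sets of \eqref{eq3.3} and \eqref{eq3.4} are in bijection via $(\bar E,\bar F) \leftrightarrow (\widetilde E,\widetilde F)$.

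Next I would show this bijection is norm-preserving in the relevant sense. By Lemma~\ref{lem2}(i)--(iii), or more directly by \eqref{eq2.211} applied to the RB matrix $[\bar E,\bar F]$, we have $\norm{[\bar E,\bar F]}_F = \norm{[\bar E,\bar F]_c^R}_F$, and by construction $[\bar E,\bar F]_c^R$ is precisely (a column permutation of) $[\widetilde E, \widetilde F]$ — more carefully, $\widetilde E = \bar E_c^R$ and $\widetilde F = \bar F_c^R$, so $\norm{[\widetilde E,\widetilde F]}_F^2 = \norm{\widetilde E}_F^2 + \norm{\widetilde F}_F^2 = \norm{\bar E}_F^2 + \norm{\bar F}_F^2 = \norm{[\bar E,\bar F]}_F^2$. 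Consequently the objective value $\norm{[\bar E,\bar F]}_F$ in \eqref{eq3.3} at a feasible point equals the objective value $\norm{[\widetilde E,\widetilde F]}_F$ in \eqref{eq3.4} at the corresponding feasible point. Since the constraint sets correspond and the objectives agree under the correspondence, $X$ minimizes \eqref{eq3.3} over feasible $(\bar E,\bar F)$ if and only if it minimizes \eqref{eq3.4} over feasible $(\widetilde E,\widetilde F)$, and moreover the minimizing perturbations correspond exactly as claimed in the statement. This simultaneously proves the "if and only if" and identifies $\bar E = E_0 + E_1\i + E_2\j + E_3\k$, $\bar F = F_0 + F_1\i + F_2\j + F_3\k$ from the block rows of $\widetilde E, \widetilde F$.

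I expect the main obstacle to be purely bookkeeping: verifying carefully that the RB product $(A+\bar E)X$ with $X$ real produces exactly the stacked real blocks $(A_t+E_t)X$ with no cross terms — this uses that $X$ has real entries and hence $\j X = X\j$, $\k X = X \k$ coefficient-wise, whereas for a general RB unknown $X$ the full $4m \times 4n$ representation $A^R$ would be needed. A secondary subtlety is making sure the identification $[\bar E,\bar F]_c^R = [\widetilde E,\widetilde F]$ respects the block-column ordering consistent with \eqref{eq2.3}, so that the Frobenius norm matches on the nose rather than up to a permutation; this is immediate once one writes out that the leading block column of $[\bar E,\bar F]^R$ is $[[\bar E,\bar F]_0^T\ \cdots\ [\bar E,\bar F]_3^T]^T$, whose $E$- and $F$-parts are $\widetilde E$ and $\widetilde F$ respectively. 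No deep estimate is required; the proof is a structural equivalence argument resting on the homomorphism property $(PT)^R = P^R T^R$ restricted to real $T$ and on the norm identity \eqref{eq2.211}.
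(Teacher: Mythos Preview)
Your proposal is correct and follows essentially the same approach as the paper: both establish a bijection between the feasible sets of \eqref{eq3.3} and \eqref{eq3.4} via the leading-block-column map and then invoke the Frobenius norm identity from Lemma~\ref{lem2}/equation~\eqref{eq2.211} to match the objectives. The only cosmetic difference is that the paper routes the constraint equivalence through the full $4m\times 4n$ representation $\bar A^R X^R = \bar B^R$ (with $X^R$ block-diagonal because $X$ is real) before collapsing to $\bar A X = \bar B$, whereas you go directly to the componentwise identity $(A_t+E_t)X = B_t+F_t$; both arguments are equivalent.
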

\begin{proof}
	Let $X \in \R^{n \times d}$ be a real TLSE solution. Then, there exist perturbation matrices $\widetilde{E}$ and $\widetilde{F}$ satisfying
	\begin{equation*}
		\norm{[\widetilde{E}, \widetilde{F}]}_F= \textrm{min}, ~	(A^R_c+ \widetilde{E})X= B^R_c+ \widetilde{F}, ~ C^R_cX=D^R_c.
	\end{equation*}
	Using these equations, we get
	\begin{align}\label{eq3.5}
		\left[(A^R_c+ \widetilde{E}), \mathcal{K}_m (A^R_c+ \widetilde{E}), \mathcal{L}_m (A^R_c+ \widetilde{E}), \mathcal{M}_m (A^R_c+ \widetilde{E})\right]
		\begin{bmatrix}
			X & 0 & 0 & 0 \\
			0 & X & 0 & 0 \\
			0 & 0 & X & 0 \\
			0 & 0 & 0 & X
		\end{bmatrix} \nonumber &\\= \left[(B^R_c+ \widetilde{F}), \mathcal{K}_m (B^R_c+ \widetilde{F}), \mathcal{L}_m (B^R_c+ \widetilde{F}), \mathcal{M}_m (B^R_c+ \widetilde{F}) \right],
	\end{align}
and
	\begin{equation}\label{eq3.51}
		\left[C^R_c, \mathcal{K}_p C^R_c, \mathcal{L}_p C^R_c, \mathcal{M}_p C^R_c\right]	\begin{bmatrix}
			X & 0 & 0 & 0 \\
			0 & X & 0 & 0 \\
			0 & 0 & X & 0 \\
			0 & 0 & 0 & X
		\end{bmatrix} = \left[D^R_c, \mathcal{K}_p D^R_c, \mathcal{L}_p D^R_c, \mathcal{M}_p D^R_c\right].
	\end{equation}
Next, express the perturbed matrices as
	\begin{equation}\label{eq3.6}
		A^R_c + \widetilde{E}= 
		\begin{bmatrix}
			A_{0}+ E_{0} \\
			A_{1}+ E_{1} \\
			A_{2}+ E_{2} \\
			A_{3}+ E_{3} 
		\end{bmatrix}, ~ B^R_c+\widetilde{F}= 
		\begin{bmatrix}
			B_0+ F_0 \\
			B_1+ F_1  \\
			B_2+ F_2 \\
			B_3+ F_3
		\end{bmatrix}.
	\end{equation}
	Now define the RB matrices:
	\begin{eqnarray*}
		\bar{A}&:=& (A_{0}+E_{0})+ (A_{1}+E_{1})\i+ (A_{2}+E_{2})\j+ (A_{3}+E_{3})\k, \\
		\bar{B}&:=& (B_0+ F_0)+ (B_1+ F_1)\i+ (B_2+ F_2)\j+ (B_3+ F_3)\k, \\
		\bar{E}&:=& E_{0}+ E_{1}\i+ E_{2}\j+ E_{3}\k, ~ \bar{F}:= F_0+ F_1\i+ F_2\j+ F_3\k.
	\end{eqnarray*}
	Using equations \eqref{eq2.2} and \eqref{eq3.6}, we obtain
	\begin{equation}\label{eq3.61}
		\bar{A}_c^R= A^R_c+ \widetilde{E}, ~	\bar{B}_c^R= B^R_c+\widetilde{F}, ~ \bar{E}_c^R=  \widetilde{E}, ~ \textrm{and} ~ \bar{F}_c^R= \widetilde{F}.
	\end{equation}
Furthermore, using equations \eqref{eq2.3} and \eqref{eq3.61}, we can express $\bar{A}^R$, $\bar{B}^R$, and $X^R$ as
	\begin{eqnarray*}
		\bar{A}^R&=& \left[(A^R_c+ \widetilde{E}), \mathcal{K}_m(A^R_c+ \widetilde{E}), \mathcal{L}_m(A^R_c+ \widetilde{E}), \mathcal{M}_m(A^R_c+ \widetilde{E})\right], \\
		\bar{B}^R&=& \left[(B^R_c+\widetilde{F}), \mathcal{K}_m(B^R_c+\widetilde{F}), \mathcal{L}_m(B^R_c+\widetilde{F}), \mathcal{M}_m(B^R_c+\widetilde{F})\right],\\
		X^R&=& 
		\begin{bmatrix}
			X & 0 & 0 & 0 \\
			0 & X & 0 & 0 \\
			0 & 0 & X & 0 \\
			0 & 0 & 0 & X
		\end{bmatrix}.
	\end{eqnarray*}
	Thus, equation \eqref{eq3.5} can be rewritten as
	\begin{eqnarray}
		\bar{A}^R X^R&=& \bar{B}^R, \label{eq3.7}\\
		(\bar{A}X)^R&=& \bar{B}^R, \nonumber\\
		\bar{A}X&=& \bar{B}. \label{eq3.8}
	\end{eqnarray}
	The matrices $\bar{A}$ and $\bar{B}$ can be represented using $A$, $B$, $\bar{E}$, and $\bar{F}$ as follows
	\begin{eqnarray}
		\bar{A}&=& (A_{0}+ E_{0})+ (A_{1}+ E_{1})\i+ (A_{2}+ E_{2})\j+ (A_{3}+ E_{3})\k \nonumber \\
		&=& (A_0+ A_1\i+ A_2\j+ A_3\k)+ (E_0 +E_1\i+ E_2\j+ E_3\k) = A+ \bar{E}, \label{eq3.9}
	\end{eqnarray}
	and
	\begin{eqnarray}
		\bar{B}&=& (B_0+ F_0)+ (B_1+ F_1)\i+ (B_2+ F_2)\j+ (B_3+ F_3)\k \nonumber\\
		&=& (B_0+ B_1\i+ B_2\j+ B_3\k)+ (F_0 + F_1\i+ F_2\j+ F_3\k) = B+ \bar{F}. \label{eq3.10}
	\end{eqnarray}
	Using \eqref{eq3.9} and \eqref{eq3.10}, equation \eqref{eq3.8} is equivalent to
	\begin{equation} \label{eq3.11}
		(A+\bar{E})X= B+ \bar{F}.
	\end{equation}
	Also, using equation \eqref{eq2.3}, we have
	\begin{equation*}
		C^R = \left[C^R_c, \mathcal{K}_p C^R_c, \mathcal{L}_p C^R_c, \mathcal{M}_p C^R_c\right], ~ D^R=\left[D^R_c, \mathcal{K}_p D^R_c, \mathcal{L}_p D^R_c, \mathcal{M}_p D^R_c\right].
	\end{equation*}
Thus, equation \eqref{eq3.51} is equivalent to
	\begin{eqnarray}
		C^R X^R&=& D^R, \nonumber\\
		(CX)^R&=& D^R, \nonumber\\
		CX&=& D. \label{eq33.13}
	\end{eqnarray}
	Finally, using Lemma~\ref{lem2}, we relate the norms 
	\begin{equation}\label{eq3.12}
		\norm{[\bar{E}, \bar{F}]}_F= \frac{1}{2} \norm{[\bar{E}, \bar{F}]^R}_F= \frac{1}{2} \norm{[\bar{E}^R, \bar{F}^R]}_F =\norm{[\bar{E}^R_c, \bar{F}^R_c]}_F=  \norm{[\widetilde{E}, \widetilde{F}]}_F= \min.  
	\end{equation}
	Combining \eqref{eq3.11}, \eqref{eq33.13}, and \eqref{eq3.12}, we conclude that there exist matrices $\bar{E} \in \QR^{m \times n}$ and $\bar{F} \in \QR^{m \times d}$ such that
	$X  \in  \R^{n \times d}$ is a real RBTLSE solution, and vice versa.
\end{proof}
To proceed with obtaining explicit expressions for $X$, we begin by introducing the following matrices:
\begin{equation}\label{eq3.122}
 \hat{P}=\left[A^R_c, B^R_c\right], ~ \hat{S}=\left[C^R_c, D^R_c\right], ~ \hat{E} = \left[\widetilde{E}, ~ \widetilde{F}\right], ~ \text{and} ~ \bar{X}=\left[X^T, -I_d\right]^T.
 \end{equation}
 Now, we present the following theorem that provides an explicit expression for the real RBTLSE solution.
\begin{theorem}\label{thm3.2}
	Consider the RBTLSE problem \eqref{eq3.3}, with the notations introduced in \eqref{eq3.122}. Consider the QR factorization of $\hat{S}^T$ given by
	\begin{equation}\label{eqth1}
		\hat{S}^T = \hat{Q}\begin{bmatrix}
			\hat{R} \\
			0             
		\end{bmatrix},
	\end{equation}
	where $\hat{Q} = \left[\hat{Q}_1, \hat{Q}_2\right]$ with  $\hat{Q}_1 \in \R^{(n+d) \times 4p}$ and $\hat{Q}_2 \in \R^{(n+d) \times (n+d-4p)}$. Let the thin SVD of $\hat{P}\hat{Q}_2$ be given by 
	\begin{equation}\label{eq23}
		\hat{P}\hat{Q}_2= \hat{U} \hat{\Sigma} \hat{V}^T,
	\end{equation} where $\hat{U} \in \R^{4m \times (n-4p+d)}$ and $\hat{V} \in \R^{(n-4p+d) \times (n-4p+d)}$ are real orthonormal matrices, and $\hat{\Sigma}= \diag(\hat{\sigma}_1, \hat{\sigma}_2, \ldots,  \hat{\sigma}_{n-4p},  \hat{\sigma}_{n-4p+1}, \ldots, \hat{\sigma}_{n-4p+d})$ is a diagonal matrix with singular values satisfying 
	\begin{equation}\label{eq23.1}
		\hat{\sigma}_1 \geq \hat{\sigma}_2 \geq \ldots \geq  \hat{\sigma}_{n-4p} >  \hat{\sigma}_{n-4p+1} \geq \ldots \geq \hat{\sigma}_{n-4p+d} > 0.
	\end{equation}  The matrices $\hat{U}$, $\hat{\Sigma}$, and $\hat{V}$ are expressed in block form as
	\vspace{-0.5cm}
	\begin{equation}\label{eq24}
		\hat{U}= \begin{blockarray}{c@{}cc@{\hspace{4pt}}cl}
			&  \mLabel{n-4p} & \mLabel{d} & &\\
			\begin{block}{[c@{\hspace{5pt}}cc@{\hspace{5pt}}c]l}
				&\hat{U}_1& \hat{U}_2& & \mLabel{4m} \\
			\end{block}
		\end{blockarray}, ~
		\hat{\Sigma}= \begin{blockarray}{c@{}cc@{\hspace{4pt}}cl}
			&  \mLabel{n-4p} & \mLabel{d} & &\\
			\begin{block}{[c@{\hspace{5pt}}cc@{\hspace{5pt}}c]l}
				&\hat{\Sigma}_1& 0& & \mLabel{n-4p} \\
				&0 & \hat{\Sigma}_2 & & \mLabel{d} \\
			\end{block}
		\end{blockarray}, ~
		\hat{V}= \begin{blockarray}{c@{}cc@{\hspace{4pt}}cl}
			&  \mLabel{n-4p} & \mLabel{d} & &\\
			\begin{block}{[c@{\hspace{5pt}}cc@{\hspace{5pt}}c]l}
				&\hat{V}_1& \hat{V}_2& & \mLabel{n-4p+d} \\
			\end{block}
		\end{blockarray},
	\end{equation}
	where $\hat{\Sigma}_1= \diag(\hat{\sigma}_1, \hat{\sigma}_2, \ldots,  \hat{\sigma}_{n-4p})$ and $\hat{\Sigma}_2= \diag(\hat{\sigma}_{n-4p+1}, \hat{\sigma}_{n-4p+2}, \ldots,  \hat{\sigma}_{n-4p+d})$. Define
	\begin{equation}\label{eq24.1}
		\check{V}=\hat{Q}_2 \hat{V} = \begin{bmatrix}
			\check{V}_1  & \check{V}_2
		\end{bmatrix}= \begin{blockarray}{c@{}cc@{\hspace{4pt}}cl}
			&  \mLabel{n-4p} & \mLabel{d} & &\\
			\begin{block}{[c@{\hspace{5pt}}cc@{\hspace{5pt}}c]l}
				& \check{V}_{11} &  \check{V}_{12} & & \mLabel{n} \\
				& \check{V}_{21}  &  \check{V}_{22}  & & \mLabel{d} \\
			\end{block}
		\end{blockarray}.
	\end{equation}
 If $\hat{\sigma}_{n-4p} >  \hat{\sigma}_{n-4p+1}$ and the matrix $\check{V}_{22}$ is invertible, then a unique real RBTLSE solution $X\in \R^{n \times d}$ exists. The corresponding expression for $X$ in this scenario is 
	\begin{equation}\label{eq26}
		X=-\check{V}_{12} \check{V}_{22}^{-1}.
	\end{equation}
The corresponding minimizing perturbations $\bar{E} = E_0 + E_1 \i + E_2 \j + E_3 \k$ and $\bar{F} = F_0 + F_1 \i + F_2 \j + F_3 \k$ are given by
	\begin{equation*}
		[E_{0}^T, E_{1}^T, E_{2}^T, E_{3}^T]^T= -\hat{U}_2 \hat{\Sigma}_2 \check{V}_{12}^T, ~  [F_{0}^T, F_{1}^T, F_{2}^T, F_{3}^T]^T= -\hat{U}_2 \hat{\Sigma}_2 \check{V}_{22}^T.
	\end{equation*}
\end{theorem}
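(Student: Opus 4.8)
The plan is to reduce the RBTLSE problem, via Theorem~\ref{thm3.1}, to the real TLSE problem \eqref{eq3.4} and then solve that real TLSE problem explicitly using the classical orthogonal-reduction approach for equality-constrained total least squares. First I would observe that Theorem~\ref{thm3.1} already tells us $X$ solves \eqref{eq3.3} iff it solves \eqref{eq3.4}, and it transfers the minimizing perturbations back to the RB setting exactly in the form claimed for $\bar{E}$ and $\bar{F}$; so the remaining work is entirely about the real TLSE problem with data $\hat{P} = [A_c^R, B_c^R]$, $\hat{S} = [C_c^R, D_c^R]$, where $C_c^R$ has full row rank by the standing assumption. I would write the constraint $C_c^R X = D_c^R$ as $\hat{S}\,\bar{X} = 0$ with $\bar{X} = [X^T, -I_d]^T$, so that $\bar{X}$ must have its columns in $\mathcal{N}(\hat{S})$, equivalently in $\mathcal{R}(\hat{Q}_2)$ by the QR factorization \eqref{eqth1}.

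Next I would carry out the standard elimination of the constraint: parametrize all admissible $[X^T,-I_d]^T$ as $\hat{Q}_2 Z$ for some $Z \in \R^{(n-4p+d)\times d}$, and observe that minimizing $\|[\widetilde{E},\widetilde{F}]\|_F$ subject to $(\hat{P}+\hat{E})\bar X = 0$ becomes the problem of finding the smallest-norm perturbation $\hat E$ making $\hat P + \hat E$ rank-deficient along the column space $\mathcal{R}(\hat Q_2 Z)$. This is exactly a (multidimensional) unconstrained TLS problem for the compressed matrix $\hat{P}\hat{Q}_2$: one seeks a $d$-dimensional subspace of $\mathcal{R}(\hat Q_2)$ on which $\hat P$ acts with minimal energy. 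The Eckart--Young/Golub--Van Loan theory then says the optimal subspace is spanned by the right singular vectors of $\hat P \hat Q_2$ associated with its $d$ smallest singular values $\hat\sigma_{n-4p+1},\dots,\hat\sigma_{n-4p+d}$, i.e. by the columns of $\hat V_2$; pulling back through $\hat Q_2$ gives $\check V_2 = \hat Q_2 \hat V_2 = [\check V_{12}^T, \check V_{22}^T]^T$. The gap condition $\hat\sigma_{n-4p} > \hat\sigma_{n-4p+1}$ guarantees this optimal subspace is unique, and invertibility of $\check V_{22}$ lets us normalize the last block of the basis to $-I_d$: writing $\bar X = \check V_2(-\check V_{22}^{-1})$ forces $X = -\check V_{12}\check V_{22}^{-1}$, which is \eqref{eq26}. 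For the perturbations, the minimal-norm $\hat E$ that annihilates $\hat P$ on the chosen subspace is $\hat E = -\hat U_2 \hat\Sigma_2 \hat V_2^T$ on $\mathcal{R}(\hat Q_2)$; restricting/pulling back the relevant columns and using $\widetilde E = \hat E(:,1{:}n)$ reshaped via \eqref{eq3.122} gives $[E_0^T,E_1^T,E_2^T,E_3^T]^T = -\hat U_2\hat\Sigma_2 \check V_{12}^T$ and similarly for $F$.

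I would then verify the three defining properties: (a) feasibility, $(\hat P + \widetilde E)[X^T,-I_d]^T$ should vanish together with $C_c^R X = D_c^R$ — the latter because $\bar X \in \mathcal R(\hat Q_2)$, the former by direct substitution using $\hat P\hat Q_2\hat V_2 = \hat U_2\hat\Sigma_2$ and $\widetilde E\hat Q_2\hat V_2 = -\hat U_2\hat\Sigma_2$; (b) minimality of $\|[\widetilde E,\widetilde F]\|_F = \|\hat E\|_F$, which equals $\sqrt{\hat\sigma_{n-4p+1}^2+\dots+\hat\sigma_{n-4p+d}^2}$ and is optimal by the singular-value interlacing/Eckart--Young argument applied to $\hat P\hat Q_2$; (c) uniqueness, which follows from the strict inequality $\hat\sigma_{n-4p} > \hat\sigma_{n-4p+1}$ (unique dominant/subdominant split) plus invertibility of $\check V_{22}$ (unique affine representative of the optimal subspace). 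Finally, Theorem~\ref{thm3.1} transfers $X$, $\bar E$, $\bar F$ back to the RB problem, and the block splitting of $\widetilde E,\widetilde F$ into $E_t,F_t$ matches the statement.

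The main obstacle I anticipate is the rigorous justification that eliminating the constraint via $\hat Q_2$ genuinely reduces the constrained TLS problem to the \emph{unconstrained} TLS problem for $\hat P\hat Q_2$ — in particular, showing that an optimal perturbation $\hat E$ can be taken to act only within $\mathcal R(\hat Q_2)$ (its action on the complementary subspace $\mathcal R(\hat Q_1)$ can be set to zero without increasing the norm or violating feasibility, since feasibility only constrains $\hat P + \hat E$ on $\mathcal R(\bar X) \subseteq \mathcal R(\hat Q_2)$), and then applying the multidimensional TLS solvability theory (with its gap and invertibility hypotheses) to the compressed problem. Handling the multidimensional (block, $d>1$) case carefully — as opposed to the classical $d=1$ TLS — and tracking the two-level block structure (the $4$-fold real-representation blocks inside the already-blocked TLS) is where the bookkeeping is heaviest, but it is routine once the reduction is in place.
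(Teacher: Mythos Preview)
Your proposal is correct and follows essentially the same approach as the paper: reduce to the real TLSE problem via Theorem~\ref{thm3.1}, parametrize $\bar{X}=\hat{Q}_2 Z$ using the QR factorization of $\hat{S}^T$, argue that the optimal perturbation can be taken of the form $\hat{E}_*=Y\hat{Q}_2^T$ (so that $\hat{E}_*\hat{Q}_1=0$), and then apply Eckart--Young--Mirsky to $\hat{P}\hat{Q}_2$ to obtain $Y_*=-\hat{U}_2\hat{\Sigma}_2\hat{V}_2^T$, $Z=\hat{V}_2$, and hence $X=-\check{V}_{12}\check{V}_{22}^{-1}$ and $[\widetilde{E},\widetilde{F}]=-\hat{U}_2\hat{\Sigma}_2\check{V}_2^T$. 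The obstacle you anticipate---justifying that the optimal $\hat{E}$ may be restricted to act on $\mathcal{R}(\hat{Q}_2)$---is exactly the step the paper isolates, and your resolution of it matches theirs.
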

\begin{proof}
	To find real RBTLSE solution, we need to find the real TLSE solution. We can reformulate equation \eqref{eq3.4} as
	\begin{eqnarray}
		\min_{ \bar{X}, \widetilde{E},\widetilde{F}} \norm{[\widetilde{E}, \widetilde{F}]}_F ~ &\textrm{subject to}& ~ \left[A^R_c+\widetilde{E}, B^R_c+\widetilde{F}\right]\bar{X} =  0, ~ \left[C^R_c, D^R_c\right]\bar{X}=0, \nonumber \\
		\min_{ \bar{X}, \hat{E}} \norm{\hat{E}}_F ~ &\textrm{subject to}& ~ \left(\hat{P}+\hat{E} \right)\bar{X} =  0, ~ \hat{S}\bar{X}=0. \label{eqpfr2.1} 
	\end{eqnarray}
From the constraint $\hat{S}\bar{X} = 0$, we can deduce that the column range $\mathcal{R}(\bar{X})$ must lie in the null space of $\hat{S}$. Since $\mathcal{N}(\hat{S}) = \mathcal{R}(\hat{S}^T)^{\perp}$, an orthonormal basis for $\mathcal{N}(\hat{S})$ can be obtained via QR factorization of $\hat{S}^T$.

	The QR factorization of $\hat{S}^T$ from \eqref{eqth1} implies that $\hat{Q}_2$ spans $\mathcal{N}(\hat{S})$. Therefore, there exists a matrix $Z \in \mathbb{R}^{(n+d-4p) \times d}$ such that $\bar{X} = \hat{Q}_2 Z$. Substituting this into equation \eqref{eqpfr2.1}, we obtain
		\begin{eqnarray}
		\min_{ \bar{X}, \hat{E}} \norm{\hat{E}}_F ~ &\textrm{subject to}& ~ \left(\hat{P}+\hat{E}\right)\hat{Q}_2Z = 0, \nonumber \\
		\min_{ \bar{X}, \hat{E}} \norm{\left[\hat{E}\hat{Q}_1, \hat{E}\hat{Q}_2\right]}_F ~ &\textrm{subject to}& ~ \left(\hat{P}\hat{Q}_2+\hat{E}\hat{Q}_2\right)Z = 0. \label{eqth21}
	\end{eqnarray}
Since the constraint only involves $\hat{E}\hat{Q}_2$, we can choose an optimal matrix $\hat{E}_{*}$ such that $\hat{E}_{*}\hat{Q}_1 = 0$ while ensuring that $\hat{P}\hat{Q}_2 + \hat{E}_{*}\hat{Q}_2$ has a null space of dimension at least $d$. The condition $\hat{E}_{*}\hat{Q}_1 = 0$ indicates that there exists a matrix $Y \in \mathbb{R}^{4m \times (n+d-4p)}$ such that $\hat{E}_{*} = Y\hat{Q}_{2}^T$. Thus, equation \eqref{eqth21} becomes
	\begin{equation}\label{eqth31}
		\min_{Y}\norm{Y}_F ~ \textrm{subject to} ~ \left(\hat{P}\hat{Q}_2+Y\right)Z=0.
	\end{equation}
To ensure that the null space of $\hat{P}\hat{Q}_2+Y$ has dimension at least $d$, by the rank-nullity theorem, we need to reduce the rank of $\hat{P}\hat{Q}_2+Y$ to $n-4p$. Hence, equation \eqref{eqth31} can be rewritten as
	\begin{equation}\label{eqth41}
		\min_{\rank(\hat{P}\hat{Q}_2 +Y) \leq n-4p}\norm{Y}_F ~ \textrm{subject to} ~ \left(\hat{P}\hat{Q}_2+Y\right)Z=0.
	\end{equation}
Let $Y_{*}$ be the matrix such that $\hat{P}\hat{Q}_2+Y_{*}$ is the best rank $n-4p$ approximation of $\hat{P}\hat{Q}_2$. By the Eckart-Young-Mirsky theorem and the SVD of $\hat{P}\hat{Q}_2$ given in \eqref{eq23}, we have
\begin{equation*}
	\hat{P}\hat{Q}_2+Y_{*} = \hat{U}_1 \hat{\Sigma}_1\hat{V}_1^T.
\end{equation*}
When $\hat{\sigma}_{n-4p} > \hat{\sigma}_{n-4p+1}$, this provides the unique rank $n-4p$ approximation of $\hat{P}\hat{Q}_2$. Equation \eqref{eqth41} thus simplifies to
\begin{equation*}
		\left(\hat{U}_1 \hat{\Sigma}_1\hat{V}_1^T\right)Z=0.
	\end{equation*}	
Since $\hat{V}_1^T\hat{V}_2=0$ (orthogonality of singular vectors), we can set $Z=\hat{V}_2$. This gives us
$$\bar{X} = \hat{Q}_2Z=\hat{Q}_2\hat{V}_2=\check{V}_2=\begin{bmatrix}
		\check{V}_{12} \\
		\check{V}_{22}
	\end{bmatrix}.$$
	If $\check{V}_{22}$ is invertible, then we obtain
	\begin{equation*}
		X=-\check{V}_{12}\check{V}_{22}^{-1}.
	\end{equation*}
	For the matrix $Y_{*}$, we have
	\begin{eqnarray*}
		Y_{*} &=& \begin{bmatrix}
			\hat{U}_1 & \hat{U}_2
		\end{bmatrix}\begin{bmatrix}
			\hat{\Sigma}_1 & 0\\
			0 & 0
		\end{bmatrix}\begin{bmatrix}
			\hat{V}_1 & \hat{V}_2
		\end{bmatrix}^T - \begin{bmatrix}
			\hat{U}_1 & \hat{U}_2
		\end{bmatrix}\begin{bmatrix}
			\hat{\Sigma}_1 & 0\\
			0 & \hat{\Sigma}_2
		\end{bmatrix}\begin{bmatrix}
			\hat{V}_1 & \hat{V}_2
		\end{bmatrix}^T \\
		& =& -\hat{U}_2 \hat{\Sigma}_2 \hat{V}_2^T.
	\end{eqnarray*} 
Therefore, the minimizing perturbation matrices are
	\begin{equation*}
		\hat{E}_{*} =[\widetilde{E}, \widetilde{F}]=Y_*\hat{Q}_2^T=-\hat{U}_2 \hat{\Sigma}_2 \hat{V}_2^T \hat{Q}_2^T = -\hat{U}_2 \hat{\Sigma}_2(\hat{Q}_2 \hat{V}_2)^T = -\hat{U}_2 \hat{\Sigma}_2 \check{V}_2^T = [-\hat{U}_2 \hat{\Sigma}_2 \check{V}_{12}^T, -\hat{U}_2 \hat{\Sigma}_2 \check{V}_{22}^T].
	\end{equation*}
	Hence, we obtain
	\begin{equation*}
		\widetilde{E} = \begin{bmatrix} E_0^T & E_1^T & E_2^T & E_3^T \end{bmatrix}^T= -\hat{U}_2 \hat{\Sigma}_2 \check{V}_{12}^T, ~  \widetilde{F} = \begin{bmatrix} F_0^T & F_1^T & F_2^T & F_3^T \end{bmatrix}^T= -\hat{U}_2 \hat{\Sigma}_2 \check{V}_{22}^T.
	\end{equation*}
\end{proof}

\section{Computation of Complex Solutions to the RBTLSE Problem}\label{sec4}
This section focuses on developing an algebraic framework for computing a complex solution to the RBTLSE problem by establishing its connection with the corresponding complex TLSE problem. We begin by presenting the mathematical formulation of the RBTLSE problem. Then, we demonstrate its equivalence to a multidimensional complex TLSE problem, which serves as the foundation for deriving an efficient solution method. Suppose 
\begin{eqnarray}
	A&=& M_1+M_2\j \in  \QR^{m \times n}, ~ B= N_1+N_2\j \in \QR^{m \times d}, \label{eq4.1} \\ 
	C&=& R_1+R_2\j \in  \QR^{p \times n}, ~ D= S_1+S_2\j \in \QR^{p \times d}. \label{eq4.2}
\end{eqnarray}
The RBTLSE problem is defined as follows:
\begin{equation}\label{eq4.3}
	\min_{X,\bar{G},\bar{H}}\norm{[\bar{G}, \bar{H}]}_F ~ \textrm{subject to} ~ (A+\bar{G})X = B+\bar{H}, ~ CX=D. 
\end{equation} 
Any complex matrix $X$ satisfying the perturbed system in \eqref{eq4.3} with minimal perturbations is called a complex RBTLSE solution.

To facilitate computation within the complex domain, we introduce the corresponding complex TLSE problem:
\begin{equation}\label{eq4.4}
	\min_{X,\widetilde{G}, \widetilde{H}} \norm{[\widetilde{G}, \widetilde{H}]}_F ~ \textrm{subject to} ~ (A^C_c+\widetilde{G})X=B^C_c+ \widetilde{H}, ~ C^C_cX=D^C_c.
\end{equation}
Any complex matrix $X$ satisfying the perturbed system in \eqref{eq4.4} with minimal perturbations is called a complex TLSE solution.

Throughout this section, we assume that $m \geq n + d$, and that the matrix $C^C_c$ has full row rank. The following theorem establishes a fundamental equivalence between the RBTLSE problem and its complex TLSE counterpart.
\begin{theorem}\label{thm4.1}
	A matrix $X \in \C^{n \times d}$ is a complex RBTLSE solution of \eqref{eq4.3} if and only if it is a complex TLSE solution of \eqref{eq4.4}. Furthermore, let $X$ be a complex TLSE solution with corresponding minimizing perturbations $\widetilde{G}$ and $\widetilde{H}$ in equation \eqref{eq4.4} given by
	\begin{equation*}
		\widetilde{G} = \begin{bmatrix} G_1^T & G_2^T \end{bmatrix}^T \in \C^{2m \times n}, ~
		\widetilde{H} = \begin{bmatrix} H_1^T & H_2^T \end{bmatrix}^T \in \C^{2m \times d},
	\end{equation*}
	where $G_1, G_2 \in \C^{m \times n}$ and $H_1, H_2 \in \C^{m \times d}$.
	
	 Then the corresponding minimizing perturbations $\bar{G}$ and $\bar{H}$ in equation \eqref{eq4.3} are given by
	\begin{equation*}
		\bar{G} = G_1 + G_2\j \in \QR^{m \times n}, ~
		\bar{H} = H_1 + H_2\j \in \QR^{m \times d}.
	\end{equation*}
	\end{theorem}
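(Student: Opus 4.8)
The plan is to mirror the argument of Theorem~\ref{thm3.1}, replacing the real representation by the complex one throughout: wherever the real proof invokes \eqref{eq2.3} and the matrices $\mathcal{K}_m,\mathcal{L}_m,\mathcal{M}_m$, the complex proof uses \eqref{eq2.4} and $\mathcal{N}_m$; wherever it invokes Lemma~\ref{lem2}, it uses Lemma~\ref{lem22}; and wherever $X^R=\mathrm{diag}(X,X,X,X)$, here $X^C=\mathrm{diag}(X,X)$ because $X=X+0\j$. Concretely, suppose first that $X\in\C^{n\times d}$ is a complex TLSE solution of \eqref{eq4.4} with minimizing perturbations $\widetilde G=[G_1^T,G_2^T]^T$ and $\widetilde H=[H_1^T,H_2^T]^T$. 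Set $\bar G=G_1+G_2\j$, $\bar H=H_1+H_2\j$, and define the RB matrices $\bar A:=(M_1+G_1)+(M_2+G_2)\j=A+\bar G$ and $\bar B:=(N_1+H_1)+(N_2+H_2)\j=B+\bar H$. By \eqref{eq2.2} one reads off $\bar A^C_c=A^C_c+\widetilde G$, $\bar B^C_c=B^C_c+\widetilde H$, $\bar G^C_c=\widetilde G$, and $\bar H^C_c=\widetilde H$.

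Next I would right-multiply the first constraint of \eqref{eq4.4} by $X^C=\mathrm{diag}(X,X)$ and use \eqref{eq2.4} to recognize the resulting identity as $\bar A^C X^C=\bar B^C$; since $(PT)^C=P^C T^C$ this reads $(\bar A X)^C=\bar B^C$, and the implication $P^C=Q^C\Rightarrow P=Q$ collapses it to $\bar A X=\bar B$, i.e. $(A+\bar G)X=B+\bar H$. The same manipulation applied to $C^C_c X=D^C_c$, now using \eqref{eq2.4} for $C^C$ and $D^C$, gives $C^C X^C=D^C$, hence $CX=D$. For the objective, Lemma~\ref{lem22} combined with \eqref{eq2.211} yields
\begin{equation*}
\norm{[\bar G,\bar H]}_F=\tfrac{1}{\sqrt2}\norm{[\bar G,\bar H]^C}_F=\tfrac{1}{\sqrt2}\norm{[\bar G^C,\bar H^C]}_F=\norm{[\bar G^C_c,\bar H^C_c]}_F=\norm{[\widetilde G,\widetilde H]}_F,
\end{equation*}
so the RBTLSE objective at $(X,\bar G,\bar H)$ equals the complex TLSE objective at $(X,\widetilde G,\widetilde H)$. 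Running the correspondence backwards, given a complex RBTLSE solution $X$ with minimizers $\bar G=G_1+G_2\j$, $\bar H=H_1+H_2\j$, the matrices $\widetilde G:=\bar G^C_c$ and $\widetilde H:=\bar H^C_c$ satisfy the constraints of \eqref{eq4.4} (apply $(\cdot)^C$ to $(A+\bar G)X=B+\bar H$ and $CX=D$ and extract the leading block column via \eqref{eq2.4}) with the same objective value. The maps $(\bar G,\bar H)\mapsto(\bar G^C_c,\bar H^C_c)$ and $(\widetilde G,\widetilde H)\mapsto(G_1+G_2\j,\,H_1+H_2\j)$ are mutually inverse, objective-preserving bijections between the two feasible sets, so a minimizer on one side is a minimizer on the other; this proves the equivalence of solutions and simultaneously the stated formula for $\bar G,\bar H$ in terms of $\widetilde G,\widetilde H$.

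The bookkeeping with \eqref{eq2.4} is routine; the one point deserving care is that the complex TLSE constraint $(A^C_c+\widetilde G)X=B^C_c+\widetilde H$ is \emph{equivalent} to the inflated identity $\bar A^C X^C=\bar B^C$, not merely implied by it. The forward direction is the block-column expansion above; the reverse holds because the leading block column of $\bar A^C X^C=\bar B^C$ is exactly $(A^C_c+\widetilde G)X=B^C_c+\widetilde H$ by \eqref{eq2.4} and $X^C=\mathrm{diag}(X,X)$, so no information is lost, and likewise for $C^C_c X=D^C_c$. I expect the main (modest) obstacle to be presenting this equivalence—and the observation that $\widetilde G,\widetilde H$ have precisely the block structure needed for $\bar G=G_1+G_2\j$, $\bar H=H_1+H_2\j$ to be well-defined RB matrices—cleanly, without re-deriving from scratch what already appears in the proof of Theorem~\ref{thm3.1}.
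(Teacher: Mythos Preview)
Your proposal is correct and follows essentially the same route as the paper's own proof: expand the complex TLSE constraints via \eqref{eq2.4} and $\mathcal{N}_m$ to recognize $\bar{A}^C X^C=\bar{B}^C$ and $C^C X^C=D^C$, collapse these using $(PT)^C=P^C T^C$ and $P^C=Q^C\Rightarrow P=Q$, and match the objectives through Lemma~\ref{lem22}. If anything, you are slightly more explicit than the paper about the reverse direction and the objective-preserving bijection between feasible sets; the paper handles that with a terse ``and vice versa.''
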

\begin{proof}
Let $X \in \C^{n \times d}$ be a complex TLSE solution. Then, there exist perturbation matrices $\widetilde{G} \in \C^{2m \times n}$ and $\widetilde{H} \in \C^{2m \times d}$ satisfying
	\begin{equation*}
		\norm{[\widetilde{G}, \widetilde{H}]}_F= \textrm{min}, ~	(A^C_c+ \widetilde{G})X= B^C_c+ \widetilde{H}, ~ C^C_cX=D^C_c.
	\end{equation*}
Using these equations, we get
	\begin{equation}\label{eq4.5}
		\left[(A^C_c+ \widetilde{G}), \mathcal{N}_m (A^C_c+ \widetilde{G})\right]
		\begin{bmatrix}
			X & 0  \\
			0 & X 
		\end{bmatrix} = \left[(B^C_c+ \widetilde{H}), \mathcal{N}_m (B^C_c+ \widetilde{H})\right],
	\end{equation}
and
	\begin{equation}\label{eq4.51}
		\left[C^C_c, \mathcal{N}_p C^C_c\right]	\begin{bmatrix}
			X & 0  \\
			0 & X 
		\end{bmatrix} = \left[D^C_c, \mathcal{N}_p D^C_c\right].
	\end{equation}
	Next, express the perturbed matrices as
	\begin{equation}\label{eq4.6}
		A^C_c + \widetilde{G}= 
		\begin{bmatrix}
			M_{1}+ G_{1} \\
			M_{2}+ G_{2} 
		\end{bmatrix}, ~ B^C_c+\widetilde{H}= 
		\begin{bmatrix}
			N_1+ H_1  \\
			N_2+ H_2 
		\end{bmatrix}.
	\end{equation}
	Now define the RB matrices:
	\begin{eqnarray*}
		\bar{A}&:=& (M_{1}+G_{1})+ (M_{2}+G_{2})\j, \\
		\bar{B}&:=& (N_1+ H_1)+ (N_2+ H_2)\j, \\
		\bar{G}&:=& G_{1}+ G_{2}\j, ~ \bar{H}:= H_1+ H_2\j.
	\end{eqnarray*}
	Using equations \eqref{eq2.2} and \eqref{eq4.6}, we obatin
	\begin{equation*}
		\bar{A}_c^C= A^C_c+ \widetilde{G}, ~	\bar{B}_c^C= B^C_c+\widetilde{H}, ~ \bar{G}_c^C=  \widetilde{G}, ~ \textrm{and} ~ \bar{H}_c^C= \widetilde{H}. 
	\end{equation*}
	Furthermore, using equations \eqref{eq2.4} and \eqref{eq4.6}, we can express $\bar{A}^C$, $\bar{B}^C$, and $X^C$ as
	\begin{eqnarray*}
		\bar{A}^C&=& \left[(A^C_c+ \widetilde{G}), \mathcal{N}_m(A^C_c+ \widetilde{G})\right], \\
		\bar{B}^C&=& \left[(B^C_c+\widetilde{H}), \mathcal{N}_m(B^C_c+\widetilde{H})\right],\\
		X^C&=& 
		\begin{bmatrix}
			X & 0  \\
			0 & X 
		\end{bmatrix}.
	\end{eqnarray*}
	Thus, equation \eqref{eq4.5} can be rewritten as
	\begin{eqnarray}
		\bar{A}^C X^C&=& \bar{B}^C, \label{eq4.7}\\
		(\bar{A}X)^C&=& \bar{B}^C, \nonumber\\
		\bar{A}X&=& \bar{B}. \label{eq4.8}
	\end{eqnarray}
	Now, we can express $\bar{A}$ and $\bar{B}$ in terms of $A$, $B$, $\bar{G}$, and $\bar{H}$ as
	\begin{eqnarray}
		\bar{A}&=& (M_{1}+ G_{1})+ (M_{2}+ G_{2})\j \nonumber \\
		&=& (M_1+ M_2\j)+ (G_1+ G_2\j) = A+ \bar{G}, \label{eq4.9}
	\end{eqnarray}
	and
	\begin{eqnarray}
		\bar{B}&=& (N_1+ H_1)+ (N_2+ H_2)\j \nonumber\\
		&=& (N_1+ N_2\j)+ (H_1+ H_2\j) = B+ \bar{H}. \label{eq4.10}
	\end{eqnarray}
	Using \eqref{eq4.9} and \eqref{eq4.10}, equation \eqref{eq4.8} is equivalent to
	\begin{equation} \label{eq4.11}
		(A+\bar{G})X= B+ \bar{H}.
	\end{equation}
Also, using equation \eqref{eq2.4}, we have
	\begin{equation}\label{eq4.12}
		C^C = \left[C^C_c, \mathcal{N}_p C^C_c\right], ~ D^C=\left[D^C_c, \mathcal{N}_p D^C_c\right].
	\end{equation}
	Thus, equation \eqref{eq4.51} is equivalent to
	\begin{eqnarray}
		C^C X^C&=& D^C, \nonumber\\
		(CX)^C&=& D^C, \nonumber\\
		CX&=& D. \label{eq44.13}
	\end{eqnarray}
	Finally, using Lemma \ref{lem22}, we relate the norms
	\begin{equation}\label{eq4.121}
		\norm{[\bar{G}, \bar{H}]}_F= \frac{1}{\sqrt{2}} \norm{[\bar{G}, \bar{H}]^C}_F= \frac{1}{\sqrt{2}} \norm{[\bar{G}^C, \bar{H}^C]}_F= \norm{[\bar{G}^C_c, \bar{H}^C_c]}_F= \norm{[\widetilde{G}, \widetilde{H}]}_F= \min.  
	\end{equation}
	Combining \eqref{eq4.11}, \eqref{eq44.13}, and \eqref{eq4.121}, we conclude that there exist matrices $\bar{G} \in \QR^{m \times n}$ and $\bar{H} \in \QR^{m \times d}$ such that
	$X  \in  \C^{n \times d}$ is a complex RBTLSE solution, and vice versa.
\end{proof}
To proceed with obtaining explicit expressions for $X$, we begin by introducing the following matrices:
\begin{equation}\label{eqmat}
\grave{P} = \left[A^C_c, B^C_c\right], ~  \grave{S} = \left[C^C_c, D^C_c\right], ~ \grave{G} = \left[\widetilde{G}, \widetilde{H}\right], ~ \text{and} ~ \bar{X}=\left[X^T, -I_d\right]^T.
\end{equation}
The following theorem establishes an explicit formula for the complex RBTLSE solution.
\begin{theorem}\label{thm4.2}
	Consider the RBTLSE problem \eqref{eq4.3}, with the notations introduced in \eqref{eqmat}. Consider the QR factorization of \(\grave{S}^H\) given by
	\begin{equation}\label{4.15}
		\grave{S}^H = \grave{Q}\begin{bmatrix}
			\grave{R} \\
			0             
		\end{bmatrix},
	\end{equation}
	where $\grave{Q} = \left[\grave{Q}_1, \grave{Q}_2\right]$ with $\grave{Q}_1 \in \C^{(n+d) \times 2p}$ and $\grave{Q}_2 \in \C^{(n+d) \times (n+d-2p)}$.
	Let the thin SVD of $\grave{P}\grave{Q}_2$ be given by 
	\begin{equation}\label{eq4.16}
		\grave{P}\grave{Q}_2= \grave{U} \grave{\Sigma} \grave{V}^H,
	\end{equation} where $\grave{U}$ and $\grave{V}$ are unitary matrices, and $\grave{\Sigma}= \diag(\grave{\sigma}_1, \grave{\sigma}_2, \ldots,  \grave{\sigma}_{n-2p},  \grave{\sigma}_{n-2p+1}, \ldots, \grave{\sigma}_{n-2p+d})$ is a diagonal matrix with singular values satisfying
	\begin{equation}\label{eq4.17}
		\grave{\sigma}_1 \geq \grave{\sigma}_2 \geq \ldots \geq  \grave{\sigma}_{n-2p} >  \grave{\sigma}_{n-2p+1} \geq \ldots \geq \grave{\sigma}_{n-2p+d} > 0.
	\end{equation} 
The matrices $\grave{U}$, $\grave{\Sigma}$, and $\grave{V}$ are expressed in block form as
	\vspace{-0.5cm}
	\begin{equation}\label{eq4.18}
		\grave{U}= \begin{blockarray}{c@{}cc@{\hspace{4pt}}cl}
			&  \mLabel{n-2p} & \mLabel{d} & &\\
			\begin{block}{[c@{\hspace{5pt}}cc@{\hspace{5pt}}c]l}
				&\grave{U}_1& \grave{U}_2& & \mLabel{2m} \\
			\end{block}
		\end{blockarray},
		\grave{\Sigma}= \begin{blockarray}{c@{}cc@{\hspace{4pt}}cl}
			&  \mLabel{n-2p} & \mLabel{d} & &\\
			\begin{block}{[c@{\hspace{5pt}}cc@{\hspace{5pt}}c]l}
				&\grave{\Sigma}_1& 0& & \mLabel{n-2p} \\
				&0 & \grave{\Sigma}_2 & & \mLabel{d} \\
			\end{block}
		\end{blockarray},
		\grave{V}= \begin{blockarray}{c@{}cc@{\hspace{4pt}}cl}
			&  \mLabel{n-2p} & \mLabel{d} & &\\
			\begin{block}{[c@{\hspace{5pt}}cc@{\hspace{5pt}}c]l}
				&\grave{V}_1& \grave{V}_2& & \mLabel{n-2p+d} \\
			\end{block}
		\end{blockarray},
	\end{equation}
	where $\grave{\Sigma}_1= \diag(\grave{\sigma}_1, \grave{\sigma}_2, \ldots,  \grave{\sigma}_{n-2p})$ and $\grave{\Sigma}_2= \diag(\grave{\sigma}_{n-2p+1}, \grave{\sigma}_{n-2p+2}, \ldots,  \grave{\sigma}_{n-2p+d})$. Define
	\begin{equation}\label{eq4.19}
		\acute{V}=\grave{Q}_2 \grave{V} = \begin{bmatrix}
			\acute{V}_1  & \acute{V}_2
		\end{bmatrix}= \begin{blockarray}{c@{}cc@{\hspace{4pt}}cl}
			&  \mLabel{n-2p} & \mLabel{d} & &\\
			\begin{block}{[c@{\hspace{5pt}}cc@{\hspace{5pt}}c]l}
				& \acute{V}_{11} &  \acute{V}_{12} & & \mLabel{n} \\
				& \acute{V}_{21}  &  \acute{V}_{22}  & & \mLabel{d} \\
			\end{block}
		\end{blockarray}.
	\end{equation}
		If \(\grave{\sigma}_{n-2p} > \grave{\sigma}_{n-2p+1}\) and the matrix \(\acute{V}_{22}\) is invertible, then a unique complex RBTLSE solution $X \in \C^{n \times d}$ exists. The corresponding expression for $X$ in this scenario is
	\begin{equation}\label{comsol}
	X = -\acute{V}_{12} \acute{V}_{22}^{-1}.
	\end{equation}
The corresponding minimizing perturbations \(\bar{G} = G_1 + G_2 \j\) and \(\bar{H} = H_1 + H_2 \j\) are given by
	\[
	\begin{bmatrix} G_1 \\ G_2 \end{bmatrix} = -\grave{U}_2 \grave{\Sigma}_2 \acute{V}_{12}^H, ~
	\begin{bmatrix} H_1 \\ H_2 \end{bmatrix} = -\grave{U}_2 \grave{\Sigma}_2 \acute{V}_{22}^H.
	\]
\end{theorem}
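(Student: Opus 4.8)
The plan is to follow the template of the proof of Theorem~\ref{thm3.2} almost verbatim, replacing the real representation $(\cdot)^R$ by the complex representation $(\cdot)^C$, the triple $\mathcal{K}_m,\mathcal{L}_m,\mathcal{M}_m$ by the single matrix $\mathcal{N}_m$, and Theorem~\ref{thm3.1} by Theorem~\ref{thm4.1}. By Theorem~\ref{thm4.1}, a matrix $X\in\C^{n\times d}$ is a complex RBTLSE solution of \eqref{eq4.3} if and only if it solves the complex TLSE problem \eqref{eq4.4}, and once the minimizing perturbations $\widetilde G=[G_1^T,G_2^T]^T$, $\widetilde H=[H_1^T,H_2^T]^T$ of \eqref{eq4.4} are known, those of \eqref{eq4.3} are recovered as $\bar G=G_1+G_2\j$, $\bar H=H_1+H_2\j$. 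Hence it suffices to produce the complex TLSE solution of \eqref{eq4.4} in closed form, and the norm equivalence in \eqref{eq4.121} certifies that the transported perturbations remain minimizing for \eqref{eq4.3}.

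First I would recast \eqref{eq4.4} in homogeneous form using the augmented unknown $\bar X=[X^T,-I_d]^T$ and the notation in \eqref{eqmat}: the problem becomes $\min_{\bar X,\grave G}\|\grave G\|_F$ subject to $(\grave P+\grave G)\bar X=0$ and $\grave S\bar X=0$. The constraint $\grave S\bar X=0$ forces $\mathcal{R}(\bar X)\subseteq\mathcal{N}(\grave S)=\mathcal{R}(\grave S^H)^{\perp}$, and the QR factorization \eqref{4.15} shows that the columns of $\grave Q_2$ form an orthonormal basis of $\mathcal{N}(\grave S)$; thus $\bar X=\grave Q_2 Z$ for some $Z\in\C^{(n+d-2p)\times d}$. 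Substituting and writing $\grave G$ in the orthonormal coordinates of $\grave Q=[\grave Q_1,\grave Q_2]$, only $\grave G\grave Q_2$ enters the constraint, so an optimal perturbation may be taken with $\grave G_*\grave Q_1=0$, i.e.\ $\grave G_*=Y\grave Q_2^H$ for some $Y\in\C^{2m\times(n+d-2p)}$ with $\|\grave G_*\|_F=\|Y\|_F$. The problem reduces to $\min_Y\|Y\|_F$ subject to $(\grave P\grave Q_2+Y)Z=0$.

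Next, for feasibility we need $\mathcal{N}(\grave P\grave Q_2+Y)$ to have dimension at least $d$, i.e.\ by rank–nullity $\rank(\grave P\grave Q_2+Y)\le n-2p$; minimizing $\|Y\|_F$ over all such perturbations is a best low-rank approximation problem, so the complex Eckart–Young–Mirsky theorem applied to the thin SVD \eqref{eq4.16} gives the optimizer $\grave P\grave Q_2+Y_*=\grave U_1\grave\Sigma_1\grave V_1^H$, which is the \emph{unique} rank-$(n-2p)$ minimizer precisely because $\grave\sigma_{n-2p}>\grave\sigma_{n-2p+1}$ in \eqref{eq4.17}. The feasibility equation then reads $\grave U_1\grave\Sigma_1\grave V_1^HZ=0$, and since $\grave V_1$ has full column rank with $\grave V_1^H\grave V_2=0$, the choice $Z=\grave V_2$ works; back-substituting gives $\bar X=\grave Q_2\grave V_2=\acute V_2=[\acute V_{12}^T,\acute V_{22}^T]^T$ with the block sizes of \eqref{eq4.19}. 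Normalizing the columns so that the bottom $d\times d$ block equals $-I_d$ requires $\acute V_{22}$ invertible, and then $\bar X=\acute V_2(-\acute V_{22}^{-1})$ yields $X=-\acute V_{12}\acute V_{22}^{-1}$, which simultaneously establishes uniqueness of $X$ under the stated hypotheses. For the perturbations, $Y_*=\grave U_1\grave\Sigma_1\grave V_1^H-\grave U\grave\Sigma\grave V^H=-\grave U_2\grave\Sigma_2\grave V_2^H$, hence $[\widetilde G,\widetilde H]=\grave G_*=Y_*\grave Q_2^H=-\grave U_2\grave\Sigma_2(\grave Q_2\grave V_2)^H=-\grave U_2\grave\Sigma_2\acute V_2^H$; splitting the columns gives $\widetilde G=-\grave U_2\grave\Sigma_2\acute V_{12}^H$ and $\widetilde H=-\grave U_2\grave\Sigma_2\acute V_{22}^H$, and Theorem~\ref{thm4.1} then delivers $\bar G=G_1+G_2\j$, $\bar H=H_1+H_2\j$ from the block rows.

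The main obstacle — the only step needing genuine care rather than bookkeeping — is the reduction argument: one must justify that an optimal $\grave G_*$ can be taken with $\grave G_*\grave Q_1=0$, and that restricting to rank-$(n-2p)$ perturbations $Y$ of $\grave P\grave Q_2$ loses no optimality, i.e.\ that the rank condition $\rank(\grave P\grave Q_2+Y)\le n-2p$ is both necessary and sufficient for a $d$-dimensional common null space compatible with $\grave S\bar X=0$. Everything downstream is the complex mirror image of the computations already carried out in Section~\ref{sec3}: the Eckart–Young–Mirsky step carries over to complex matrices in the Frobenius norm, the strict gap $\grave\sigma_{n-2p}>\grave\sigma_{n-2p+1}$ supplies uniqueness of the truncated SVD, and invertibility of $\acute V_{22}$ supplies the well-posedness of the normalization that extracts $X$.
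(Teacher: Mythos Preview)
Your proposal is correct and follows essentially the same approach as the paper's own proof: reduce to the complex TLSE problem via Theorem~\ref{thm4.1}, parametrize $\bar X=\grave Q_2 Z$ through the QR factorization of $\grave S^H$, restrict the optimal perturbation to the form $\grave G_*=Y\grave Q_2^H$, apply Eckart--Young--Mirsky to $\grave P\grave Q_2$, and read off $X$ and the perturbations from the resulting blocks. Your explicit flagging of the reduction step (that $\grave G_*\grave Q_1=0$ loses no optimality and that the rank bound is the right feasibility condition) is a nice touch of rigor, but the argument is otherwise the complex mirror of Theorem~\ref{thm3.2} exactly as the paper presents it.
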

\begin{proof}
	To find complex RBTLSE solution, we need to find the complex TLSE solution. We can reformulate equation \eqref{eq4.4} as
	\begin{eqnarray}
		\min_{ \bar{X}, \widetilde{G},\widetilde{H}} \norm{[\widetilde{G}, \widetilde{H}}_F ~ &\textrm{subject to}& ~ \left[A^C_c+\widetilde{G}, B^C_c+\widetilde{H}\right]\bar{X} =  0, ~ \left[C^C_c, D^C_c\right]\bar{X}=0, \nonumber \\
		\min_{ \bar{X}, \grave{G}} \norm{\grave{G}}_F ~ &\textrm{subject to}& ~ \left(\grave{P}+\grave{G} \right)\bar{X} =  0, ~ \grave{S}\bar{X}=0. \label{eqpf2.1} 
	\end{eqnarray}
	From the constraint $\grave{S} \bar{X} = 0$, we can deduce that the column range $\mathcal{R}(\bar{X})$ must lie in the null space of $\grave{S}$. Since $\mathcal{N}(\grave{S}) = \mathcal{R}(\grave{S}^H)^{\perp}$, an orthonormal basis for $\mathcal{N}(\grave{S})$ can be obtained via QR factorization of $\grave{S}^H$. 
	
	The QR factorization of $\grave{S}^H$ from \eqref{4.15} implies that $\grave{Q}_2$ spans $\mathcal{N}(\grave{S})$. Therefore, there exists a matrix $Z \in \C^{(n+d-2p) \times d}$ such that $\bar{X} = \grave{Q}_2 Z$. Substituting this into equation \eqref{eqpf2.1}, we obtain
	\begin{eqnarray}
		\min_{ \bar{X}, \grave{G}} \norm{\grave{G}}_F ~ &\textrm{subject to}& ~ \left(\grave{P}+\grave{G}\right)\grave{Q}_2Z = 0, \nonumber \\
		\min_{ \bar{X}, \grave{G}} \norm{\left[\grave{G}\grave{Q}_1, \grave{G}\grave{Q}_2\right]}_F ~ &\textrm{subject to}& ~ \left(\grave{P}\grave{Q}_2+\grave{G}\grave{Q}_2\right)Z = 0. \label{eqth2}
	\end{eqnarray}
Since the constraint only involves $\grave{G}\grave{Q}_2$, we can choose an optimal matrix $\grave{G}_{*}$ such that $\grave{G}_{*}\grave{Q}_1 =0$ while ensuring that $\grave{P}\grave{Q}_2 + \grave{G}_{*}\grave{Q}_2$ has a null space of dimension at least $d$. The condition $\grave{G}_{*}\grave{Q}_1=0$ indicates that there exists a matrix $Y \in \mathbb{C}^{2m \times (n+d-2p)}$ such that $\grave{G}_{*} = Y\grave{Q}_{2}^H$. Thus, equation \eqref{eqth2} becomes
	\begin{equation}\label{eqth3}
		\min_{Y}\norm{Y}_F ~ \textrm{subject to} ~ \left(\grave{P}\grave{Q}_2+Y\right)Z=0.
	\end{equation}
To ensure that the null space of $\grave{P}\grave{Q}_2+Y$ has dimension at least $d$, by the rank-nullity theorem, we need to reduce the rank of $\grave{P}\grave{Q}_2+Y$ to $n-2p$. Hence, equation \eqref{eqth3} can be rewritten as
	\begin{equation}\label{eqth4}
		\min_{\rank(\grave{P}\grave{Q}_2 +Y) \leq n-2p}\norm{Y}_F ~ \textrm{subject to} ~ \left(\grave{P}\grave{Q}_2+Y\right)Z=0.
	\end{equation}
Let $Y_{*}$ be the matrix such that $\grave{P}\grave{Q}_2+Y_{*}$ is the best rank $n-2p$ approximation of $\grave{P}\grave{Q}_2$. By the Eckart-Young-Mirsky theorem and the SVD of $\grave{P}\grave{Q}_2$ given in \eqref{eq4.16}, we have
\begin{equation*}
\grave{P}\grave{Q}_2+Y_{*}  = \grave{U}_1 \grave{\Sigma_1}\grave{V}_1^H.
\end{equation*}
When $\hat{\sigma}_{n-2p} > \hat{\sigma}_{n-2p+1}$, this provides the unique rank $n-2p$ approximation of $\grave{P}\grave{Q}_2$. Equation \eqref{eqth4} thus simplifies to
\begin{equation*}
		\left(\grave{U}_1 \grave{\Sigma}_1\grave{V}_1^H\right)Z=0.
\end{equation*}	
Since $\grave{V}_1^H\grave{V}_2=0$ (orthogonality of singular vectors), we can set $Z=\grave{V}_2$. This gives us
	$$\bar{X} = \grave{Q}_2Z=\grave{Q}_2\grave{V}_2=\acute{V}_2=\begin{bmatrix}
		\acute{V}_{12} \\
		\acute{V}_{22}
	\end{bmatrix}.$$
	If $\acute{V}_{22}$ is invertible, then we obtain
	\begin{equation*}
		X=-\acute{V}_{12}\acute{V}_{22}^{-1}.
	\end{equation*}
	For the matrix $Y_{*}$, we have
	\begin{eqnarray*}
		Y_{*} &=& \begin{bmatrix}
			\grave{U}_1 & \grave{U}_2
		\end{bmatrix}\begin{bmatrix}
			\grave{\Sigma}_1 & 0\\
			0 & 0
		\end{bmatrix}\begin{bmatrix}
			\grave{V}_1 & \grave{V}_2
		\end{bmatrix}^H - \begin{bmatrix}
			\grave{U}_1 & \grave{U}_2
		\end{bmatrix}\begin{bmatrix}
			\grave{\Sigma}_1 & 0\\
			0 & \grave{\Sigma}_2
		\end{bmatrix}\begin{bmatrix}
			\grave{V}_1 & \grave{V}_2
		\end{bmatrix}^H \\
		& =& -\grave{U}_2 \grave{\Sigma}_2 \grave{V}_2^H.
	\end{eqnarray*} 
Therefore, the minimizing perturbation matrices are
	\begin{equation*}
		\grave{G}_{*} =[\widetilde{G}, \widetilde{H}]=Y_*\grave{Q}_2^H=-\grave{U}_2 \grave{\Sigma}_2 \grave{V}_2^H \grave{Q}_2^H = -\grave{U}_2 \grave{\Sigma}_2(\grave{Q}_2 \grave{V}_2)^H = -\grave{U}_2 \grave{\Sigma}_2 \acute{V}_2^H = [-\grave{U}_2 \grave{\Sigma}_2 \acute{V}_{12}^H, -\grave{U}_2 \grave{\Sigma}_2 \acute{V}_{22}^H].
	\end{equation*}
	Hence, we obtain
	\begin{equation*}
		\widetilde{G} = \begin{bmatrix}
			G_1 \\ G_2
		\end{bmatrix}= - \grave{U}_2 \grave{\Sigma}_2 \acute{V}_{12}^H, ~   \widetilde{H} = \begin{bmatrix}
			H_1 \\ H_2
		\end{bmatrix}= - \grave{U}_2 \grave{\Sigma}_2 \acute{V}_{22}^H.
	\end{equation*}
\end{proof}

\section{Perturbation Analysis of the RBTLSE Solution}\label{sec5}
In numerical linear algebra, understanding how small perturbations in input data affect the computed solutions is essential for assessing solution reliability. This section presents a mathematical framework for quantifying the sensitivity of RBTLSE solutions through condition number analysis.

We begin by defining a perturbation model for the RBTLSE problem. Consider the original problem matrices 
$A \in \QR^{m \times n}$, $B \in \QR^{m \times d}$, $C \in \QR^{p \times n}$, and $D \in \QR^{p \times d}$.
The perturbed matrices are expressed as
\begin{eqnarray}
	&\widehat{A} =A+\Delta A  \in \QR^{m \times n}, ~  \widehat{B} = B+\Delta B  \in \QR^{m \times d}, \label{eq5.1} \\
	&\widehat{C} = C+\Delta C  \in \QR^{p \times n}, ~  \widehat{D} = D+\Delta D  \in \QR^{p \times d}, \label{eq5.2}
\end{eqnarray}	
where perturbation matrices are expressed as
\begin{eqnarray*}
	&\Delta A=\Delta A_0+\Delta A_1 \i +\Delta A_2 \j + \Delta A_3 \k =\Delta M_1 + \Delta M_2 \j, \\
	&\Delta B=\Delta B_0+\Delta B_1 \i +\Delta B_2 \j + \Delta B_3 \k = \Delta N_1 + \Delta N_2 \j,  \\
	& \Delta C=\Delta C_0+\Delta C_1 \i +\Delta C_2 \j + \Delta C_3 \k = \Delta R_1 + \Delta R_2 \j,  \\ 
	&\Delta D=\Delta D_0+\Delta D_1 \i +\Delta D_2 \j + \Delta D_3 \k = \Delta S_1 + \Delta S_2 \j.
\end{eqnarray*}
To facilitate analysis, we define the concatenated matrices
\begin{equation}\label{eq5.3}
J = \begin{bmatrix}
		C \\
		A
	\end{bmatrix}, ~ K = \begin{bmatrix}
	D \\
	B
\end{bmatrix}, ~ \Delta J = \begin{bmatrix}
\Delta C \\
\Delta A
\end{bmatrix}, ~ \Delta K = \begin{bmatrix}
\Delta D \\
\Delta B
\end{bmatrix}.
\end{equation}
We assume that the perturbation in input matrices is sufficiently small. When the input data is perturbed, the corresponding perturbed RBTLSE problem is given by
\begin{equation}\label{eq5.5}
	\min_{X,\bar{I},\bar{J}}\norm{[\bar{I}, \bar{J}]}_F ~ \textrm{subject to} ~  (\widehat{A}+\bar{I})X = \widehat{B}+\bar{J}, ~ \widehat{C}X=\widehat{D}. 
\end{equation}

We will first study the perturbation analysis of real RBTLSE solution. Let $X_{RT}$ be the real RBTLSE solution to \eqref{eq3.3} and $\widehat{X}_{RT}$ be the real RBTLSE solution to the perturbed problem \eqref{eq5.5}. The sensitivity of $X_{RT}$ to perturbations is quantified through the relative normwise condition number, defined as:
\begin{equation}\label{eq5.6}
	\mathit{k}^{rel}_r(X_{RT}, J, K) = \lim \limits_{\epsilon \rightarrow 0} \sup \left\{\frac{\|\Delta X_{RT}\|_F}{\epsilon \|X_{RT}\|_F} \; | \; \|[\Delta J, \Delta K]\|_F \leq \epsilon \|[J, K]\|_F\right\},
\end{equation} 
where $\Delta X_{RT}= X_{RT}-\widehat{X}_{RT}$. To facilitate analysis, we define the following concatenated matrices in the real domain
\begin{equation}\label{eq5.7}
	J_r = \begin{bmatrix}
	C^R_c \\
		A^R_c
	\end{bmatrix}, ~ K_r = \begin{bmatrix}
		D^R_c \\
		B^R_c
	\end{bmatrix}, ~ \Delta J_r = \begin{bmatrix}
	(\Delta C)^R_c \\
	(\Delta A)^R_c
\end{bmatrix}, ~ \Delta K_r = \begin{bmatrix}
(\Delta D)^R_c \\
(\Delta B)^R_c
\end{bmatrix}.
\end{equation}
\begin{theorem} \label{thm5.1}
	Consider the RBTLSE problem \eqref{eq3.3} and the perturbed RBTLSE problem \eqref{eq5.5}, with the notations introduced in equations \eqref{eq5.3}, \eqref{eq5.7}, and Theorem \ref{thm3.2}. Assume that the conditions specified in Theorem \ref{thm3.2} are satisfied. Let $\hat{S}= \hat{U}_s \hat{S}_s \hat{V}_s^T$ be the skinny SVD of $\hat{S}$. Define
	\begin{equation*}
		Q_r=\begin{bmatrix}
			- \left(\hat{P} \hat{S}^{\dagger}\right)^T \\ I_{4m}
		\end{bmatrix}, ~ S_r = \begin{bmatrix}
		\hat{S}_s & 0 \\
		0 & \hat{\Sigma}_1
	\end{bmatrix},~
	 W_r = \begin{bmatrix}
			\hat{V}_s & \check{V}_1
			\end{bmatrix} =  \begin{blockarray}{c@{}c@{\hspace{4pt}}cl}
			&  \mLabel{n}  & &\\
			\begin{block}{[c@{\hspace{5pt}}c@{\hspace{5pt}}c]l}
				&W_{1r}& & \mLabel{n} \\
				&W_{2r}& & \mLabel{d} \\
			\end{block}
		\end{blockarray}.
	\end{equation*} Then, the relative normwise condition number of the real RBTLSE solution $X_{RT}$ is expressed as
	\begin{equation}\label{eq5.9}
		\mathit{k}^{rel}_{r}(X_{RT},J,K)=\|H_r G_r Z_r\|_2 \frac{\|[J_r, K_r]\|_F}{\|X_{RT}\|_F},
	\end{equation}
	where
	\begin{eqnarray*}
		H_r &=&  \left(\check{V}_{22}^{-T} \otimes W_{1r}^{-T}\right) \Pi_{(d,n)}, \\
		G_r &=& \left(\left(S_r^2 \otimes I_d\right) - \begin{bmatrix}
			0_{4p} &  0\\
			0      & I_{n-4p}
		\end{bmatrix} \otimes (\hat{\Sigma}_2^T \hat{\Sigma}_2)\right)^{-1} \left[I_n \otimes \hat{\Sigma}_2^T, \; S_r \otimes I_d\right], \\
		Z_r &=& \diag \left(\begin{bmatrix}
			0_{4p}  & 0 \\
			0 & I_{n-4p}
		\end{bmatrix} \otimes \left(\hat{U}_2^T Q_r^T\right), \; \begin{bmatrix} 
	I_{4p} & 0 \\
	-\hat{U}_1^T (\hat{P}\hat{S}^{\dagger})\hat{U}_s & I_{n-4p}
\end{bmatrix} \otimes I_d\right).
	\end{eqnarray*}
\end{theorem}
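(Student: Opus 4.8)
The plan is to transfer the problem to the real domain via Theorem~\ref{thm3.1}, differentiate the closed form of Theorem~\ref{thm3.2}, and then match the resulting first-order map from $[\Delta J_r,\Delta K_r]$ to $\Delta X_{RT}$ with $H_rG_rZ_r$.

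\textbf{Reduction to the real TLSE problem.} First I would record that, by Theorem~\ref{thm3.1}, $X_{RT}$ is exactly the real TLSE solution associated with $\hat{P}=[A^R_c,B^R_c]$ and $\hat{S}=[C^R_c,D^R_c]$; note $[J_r,K_r]=\big[\hat{S}^T,\hat{P}^T\big]^T$ and the data perturbation induces the real perturbation $[\Delta J_r,\Delta K_r]=\big[\Delta\hat{S}^T,\Delta\hat{P}^T\big]^T$. By \eqref{eq2.1}--\eqref{eq2.211} (equivalently Lemma~\ref{lem2}), passing to leading real block columns only re-indexes the same real scalars, so $\norm{[\Delta J,\Delta K]}_F=\norm{\vec([\Delta J_r,\Delta K_r])}_2$ and $\norm{[J,K]}_F=\norm{[J_r,K_r]}_F$. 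Hence the supremum in \eqref{eq5.6} is literally the relative condition number of the real TLSE problem, and the whole computation can be done over $\R$.

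\textbf{Differentiating the solution.} From the proof of Theorem~\ref{thm3.2}, $\bar{X}=[X_{RT}^T,-I_d]^T$ has column range $\mathcal{R}(\check{V}_2)$ with $\check{V}_2=\hat{Q}_2\hat{V}_2$. Since $\hat{V}_s$ spans $\mathcal{R}(\hat{S}^T)$ and $\hat{Q}_2$ spans $\mathcal{N}(\hat{S})=\mathcal{R}(\hat{S}^T)^{\perp}$, the matrix $[\hat{V}_s,\hat{Q}_2]$ is orthogonal, so $[\hat{V}_s,\check{V}_1,\check{V}_2]=[W_r,\check{V}_2]$ is orthogonal; in particular $W_r^T\bar{X}=0$, i.e.\ $W_{1r}^TX_{RT}=W_{2r}^T$, and using $W_r^TW_r=I_n$ one checks $[I_n,X_{RT}]W_r=W_{1r}^{-T}$ (here $W_{1r}$ is invertible automatically, since $\bar{X}$ has full column rank and is orthogonal to $W_r$). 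Differentiating $X_{RT}=-\check{V}_{12}\check{V}_{22}^{-1}$ gives $\Delta X_{RT}=-[I_n,X_{RT}]\,\Delta\check{V}_2\,\check{V}_{22}^{-1}$; because replacing $\check{V}_2$ by $\check{V}_2R$ with $R$ orthogonal leaves $X_{RT}$ unchanged, only the $\mathcal{R}(W_r)$-component of $\Delta\check{V}_2$ contributes, and writing it as $W_r\,[\,\Xi_s^T,\Xi_1^T\,]^T$ with $\Xi_s\in\R^{4p\times d}$, $\Xi_1\in\R^{(n-4p)\times d}$, I obtain $\Delta X_{RT}=-W_{1r}^{-T}\,[\,\Xi_s^T,\Xi_1^T\,]^T\,\check{V}_{22}^{-1}$, whose vectorization is $-H_r\,\vec\!\big([\,\Xi_s^T,\Xi_1^T\,]\big)$ --- the factor $\check{V}_{22}^{-T}\otimes W_{1r}^{-T}$ in $H_r$ producing the left/right multiplications and $\Pi_{(d,n)}$ the transpose.

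\textbf{The two coupled perturbations.} Next I would compute $\Xi_s$ and $\Xi_1$. The QR perturbation gives $\Delta\hat{Q}_2=-\hat{S}^{\dagger}\Delta\hat{S}\,\hat{Q}_2=-\hat{V}_s\hat{S}_s^{-1}\hat{U}_s^T\Delta\hat{S}\,\hat{Q}_2$, so the $\hat{V}_s$-component of $\Delta\check{V}_2$ is $\hat{V}_s\Xi_s$ with $\hat{S}_s^2\Xi_s=-\hat{S}_s\,\hat{U}_s^T\Delta\hat{S}\,\check{V}_2$. For $\hat{V}_2$, the identity $Q_r^T\big[\hat{S}^T,\hat{P}^T\big]^T=\hat{P}(I-\hat{S}^{\dagger}\hat{S})$ yields $\Delta(\hat{P}\hat{Q}_2)=\big(Q_r^T[\Delta J_r,\Delta K_r]\big)\hat{Q}_2=:\Delta_P\hat{Q}_2$; the standard right-singular-vector perturbation of the Gram matrix $(\hat{P}\hat{Q}_2)^T(\hat{P}\hat{Q}_2)$, well posed because $\hat{\sigma}_{n-4p}>\hat{\sigma}_{n-4p+1}$ separates the two eigenvalue groups, produces the Sylvester equation $\hat{\Sigma}_1^2\Xi_1-\Xi_1\hat{\Sigma}_2^2=-\big((\hat{U}_2^T\Delta_P\check{V}_1)^T\hat{\Sigma}_2+\hat{\Sigma}_1\hat{U}_1^T\Delta_P\check{V}_2\big)$. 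Vectorizing both relations jointly produces one block-diagonal system whose operator is $(S_r^2\otimes I_d)-\big(\diag(0_{4p},I_{n-4p})\otimes\hat{\Sigma}_2^T\hat{\Sigma}_2\big)$ (invertible exactly because each $\hat{s}_i>0$ and $\hat{\sigma}_i>\hat{\sigma}_{n-4p+j}$ for $i\le n-4p$) and whose right-hand side is $[\,I_n\otimes\hat{\Sigma}_2^T,\ S_r\otimes I_d\,]$ applied to the pair built from $\hat{U}_2^T\Delta_P\check{V}_1$ and from $\hat{U}_s^T\Delta\hat{S}\check{V}_2$ together with $\hat{U}_1^T\Delta_P\check{V}_2$. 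Isolating how $[\Delta J_r,\Delta K_r]$ enters --- right-multiplication by $W_r=[\hat{V}_s,\check{V}_1]$ and by $\check{V}_2$, left-compression by $\hat{U}_s,\hat{U}_1,\hat{U}_2$, the constraint-elimination $Q_r$, and the shear $\big[\begin{smallmatrix}I_{4p}&0\\-\hat{U}_1^T(\hat{P}\hat{S}^{\dagger})\hat{U}_s&I_{n-4p}\end{smallmatrix}\big]$ linking $\Delta\hat{P}$ to $\Delta_P$ --- shows, after routine commutation-matrix bookkeeping, that this map is exactly $G_rZ_r\Phi$, where $\Phi$ is the reshuffling map carrying $\vec([\Delta J_r,\Delta K_r])$ to the pair $\big(\vec([\Delta J_r,\Delta K_r]W_r),\ \vec(\blkd(\hat{U}_s^T,\hat{U}_1^T)[\Delta J_r,\Delta K_r]\check{V}_2)\big)$. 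Thus $\vec(\Delta X_{RT})=H_rG_rZ_r\Phi\,\vec([\Delta J_r,\Delta K_r])$ to first order (the stray sign being absorbed in $\Delta X_{RT}=X_{RT}-\widehat{X}_{RT}$).

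\textbf{Finishing.} Finally $\Phi\Phi^T=I$: each component of $\Phi$ is a co-isometry because $W_r,\check{V}_2,\hat{U}_s,\hat{U}_1$ have orthonormal columns, and the cross block vanishes since $W_r^T\check{V}_2=0$; a surjective partial isometry does not change the spectral norm of a product, so $\norm{H_rG_rZ_r\Phi}_2=\norm{H_rG_rZ_r}_2$. Letting $\epsilon\to0$ in \eqref{eq5.6} --- legitimate because the hypotheses of Theorem~\ref{thm3.2} (full row rank of $C^R_c$, the gap $\hat{\sigma}_{n-4p}>\hat{\sigma}_{n-4p+1}$, and $\check{V}_{22}$ invertible) make $\widehat{X}_{RT}$ a differentiable function of the data near $(A,B,C,D)$ by the implicit function theorem, so the second-order remainder drops out --- and using $\norm{[\Delta J,\Delta K]}_F=\norm{\vec([\Delta J_r,\Delta K_r])}_2$ and $\norm{[J,K]}_F=\norm{[J_r,K_r]}_F$, I obtain $\mathit{k}^{rel}_r(X_{RT},J,K)=\norm{H_rG_rZ_r}_2\,\norm{[J_r,K_r]}_F/\norm{X_{RT}}_F$, which is \eqref{eq5.9}. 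I expect the main obstacle to be the third step: pushing the QR-factorization perturbation and the SVD right-subspace perturbation through all of the Kronecker-product and commutation-matrix algebra so that they collapse \emph{exactly} into $G_rZ_r$ --- in particular verifying that the elementary solve for $\Xi_s$ (denominators $\hat{s}_i^2$) and the Sylvester solve for $\Xi_1$ (denominators $\hat{\sigma}_i^2-\hat{\sigma}_{n-4p+j}^2$) amalgamate under the single block-diagonal operator inside $G_r$, and that the $\Delta\hat{P}$--$\Delta\hat{S}$ coupling is captured precisely by $Q_r$ in $Z_r$.
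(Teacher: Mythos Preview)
Your reduction step --- using Theorem~\ref{thm3.1} and the norm identities of \eqref{eq2.211} to show that the RBTLSE condition number coincides with the condition number of the associated real TLSE problem \eqref{eq3.4} --- is exactly what the paper does. After that point, however, the two arguments diverge.

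The paper's proof stops there: once the problem is identified with a real TLSE problem, it simply invokes Theorem~4.2 of \cite{MR4371085}, which already supplies the closed-form expression $\|H_rG_rZ_r\|_2\,\|[J_r,K_r]\|_F/\|X_{RT}\|_F$ for the TLSE condition number. No differentiation, no Sylvester equations, no commutation-matrix algebra is carried out in the paper itself.

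Your proposal instead re-derives that cited result from scratch: you differentiate $X_{RT}=-\check{V}_{12}\check{V}_{22}^{-1}$, propagate the QR perturbation of $\hat{S}^T$ and the singular-subspace perturbation of $\hat{P}\hat{Q}_2$, assemble the block Sylvester system, and then argue that the resulting linear map vectorizes to $H_rG_rZ_r\Phi$ with $\Phi\Phi^T=I$. The ingredients you list (the identity $[I_n,X_{RT}]W_r=W_{1r}^{-T}$, the constraint-elimination $Q_r^T[J_r,K_r]\hat{Q}_2=\hat{P}\hat{Q}_2$, the co-isometry of $\Phi$ via $W_r^T\check{V}_2=0$) are correct and are precisely what underlies the proof in \cite{MR4371085}; your own caveat about the ``routine commutation-matrix bookkeeping'' in the third step is accurate --- that is where the real work sits, and your sketch does not fully carry it out. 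So your route is self-contained but substantially longer and leaves the hardest algebraic verification only outlined, whereas the paper's route is a two-line reduction-plus-citation. If your goal is to match the paper, you can replace everything after the reduction by a reference to \cite[Theorem~4.2]{MR4371085}.
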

\begin{proof}
The proof follows a logical sequence establishing equivalence between the RBTLSE problem and the real TLSE problem. According to Theorem \ref{thm3.1}, a solution $X_{RT}$ qualifies as a real RBTLSE solution if and only if it also solves the associated real TLSE problem. 
	
	The perturbed real TLSE problem corresponding to \eqref{eq5.5} is formulated as
	\begin{equation}\label{eq5.591}
		\min_{X,\tilde{I},\tilde{J}}\|[\tilde{I}, \tilde{J}]\|_F ~ \text{subject to} ~ (\widehat{A}^R_c+\tilde{I})X = \widehat{B}^R_c+\tilde{J}, ~ \widehat{C}^R_cX=\widehat{D}^R_c.
	\end{equation}
	
	Again applying Theorem \ref{thm3.1}, we know that $\widehat{X}_{RT}$ is a real TLSE solution of the perturbed problem \eqref{eq5.591}. 
	
	A key insight comes from the properties of RB matrices and their Frobenius norm representations established in equation \eqref{eq2.211}. Specifically, we have
	\begin{equation}\label{eq5.597}
		\|[\Delta J, \Delta K]\|_F = \|[\Delta J_r, \Delta K_r]\|_F, ~ \|[J, K]\|_F = \|[J_r, K_r]\|_F.
	\end{equation}
	
	These equalities are critical as they allow us to transform the perturbation analysis of the real RBTLSE solution to the perturbation analysis of the real TLSE solution of the corresponding problem. 
	
	Consequently, we can express the relative normwise condition number of the real RBTLSE solution as
	\begin{equation*}
		\mathit{k}^{rel}_{r}(X_{RT},J,K) = \lim_{\epsilon \rightarrow 0} \sup \left\{\frac{\|\Delta X_{RT}\|_F}{\epsilon \|X_{RT}\|_F} \; \Bigg| \; \|[\Delta J_r, \Delta K_r]\|_F \leq \epsilon \|[J_r, K_r]\|_F\right\}.
	\end{equation*}
	
	This expression is precisely the relative normwise condition number for the real TLSE solution of the corresponding real TLSE problem formulated in equation \eqref{eq3.4}. The equivalence allows us to shift our analysis from the RB domain to the more tractable real domain.
	
	Therefore, to determine the sensitivity of the real RBTLSE solution to perturbations, we can directly apply the established perturbation analysis framework for real TLSE solutions. By invoking Theorem $4.2$ from \cite{MR4371085}, which provides a closed-form expression for the condition number of TLSE problems, we obtain the desired expression for $\mathit{k}^{rel}_{r}(X_{RT},J,K)$.
\end{proof}
To quantify the impact of perturbations on the real RBTLSE solution, we define the relative size of input perturbations as
\begin{equation}\label{eq5.91}
	\varepsilon_n = \frac{\|[\Delta J, \, \Delta K]\|_F}{\|[J, \, K]\|_F}.
	\end{equation}
Using the condition number derived previously, we can bound the relative error in the real RBTLSE solution as follows
\begin{equation}\label{eq5.10}
	\frac{\| \Delta X_{RT} \|_F}{\|X_{RT} \|_F} \leq \mathit{k}^{rel}_{r}(X_{RT},J,K) \varepsilon_n \equiv U_r.
\end{equation}
Here, $U_r$ denotes the first-order upper bound on the relative forward error. This inequality provides a practical tool for estimating the worst-case impact of data perturbations on solution accuracy.

Having analyzed the sensitivity of real RBTLSE solutions to perturbations, we now extend our analysis to complex RBTLSE solutions. For the complex RBTLSE problem formulated in equation \eqref{eq4.3}, we examine how perturbations in the input matrices $A$, $B$, $C$, and $D$ affect the complex RBTLSE solution.

Denote $X_{CT}$ as the complex RBTLSE solution of the original problem. Let $\widehat{X}_{CT}$ represent the solution corresponding to the perturbed system \eqref{eq5.5}, and define the associated error by $\Delta X_{CT} = \widehat{X}_{CT} - X_{CT}$. The sensitivity of $X_{CT}$ to perturbations is quantified through the relative normwise condition number, defined as
\begin{equation}\label{eq5.11}
	\mathit{k}^{rel}_c(X_{CT}, J, K) = \lim \limits_{\epsilon \rightarrow 0} \sup \left\{\frac{\|\Delta X_{CT}\|_F}{\epsilon \|X_{CT}\|_F} \; | \; \|[\Delta J, \Delta K]\|_F \leq \epsilon \|[J, K]\|_F\right\}.
\end{equation} 
To facilitate our analysis in the complex domain, we define the following concatenated matrices:
\begin{equation}\label{eq5.12}
	J_c = \begin{bmatrix}
		C^C_c \\
		A^C_c
	\end{bmatrix}, ~ K_c = \begin{bmatrix}
		D^C_c \\
		B^C_c
	\end{bmatrix}, ~ \Delta J_c = \begin{bmatrix}
	(\Delta C)^C_c \\
	(\Delta A)^C_c
\end{bmatrix}, ~ \Delta K_c = \begin{bmatrix}
(\Delta D)^C_c \\
(\Delta B)^C_c
\end{bmatrix}.
\end{equation}
\begin{theorem} \label{thm5.2}
	Consider the RBTLSE problem \eqref{eq4.3} and the perturbed RBTLSE problem \eqref{eq5.5}, with the notations introduced in equations \eqref{eq5.3}, \eqref{eq5.12}, and Theorem \ref{thm4.2}. Assume that the conditions specified in Theorem \ref{thm4.2} are satisfied. Let $\grave{S}= \grave{U}_s \grave{S}_s \grave{V}_s^H$ be the skinny SVD of $\grave{S}$. Define
	\begin{equation*}
		Q_c=\begin{bmatrix}
			- \left(\grave{P} \grave{S}^{\dagger}\right)^H \\ I_{2m}
		\end{bmatrix}, ~ S_c = \begin{bmatrix}
			\grave{S}_s & 0 \\
			0 & \grave{\Sigma}_1
		\end{bmatrix}, ~
		W_c= \begin{bmatrix}
			\grave{V}_s & \acute{V}_1
		\end{bmatrix} =  \begin{blockarray}{c@{}c@{\hspace{4pt}}cl}
			&  \mLabel{n}  & &\\
			\begin{block}{[c@{\hspace{5pt}}c@{\hspace{5pt}}c]l}
				&W_{1c}& & \mLabel{n} \\
				&W_{2c}& & \mLabel{d} \\
			\end{block}
		\end{blockarray}.
	\end{equation*} Then, the relative normwise condition number of the complex RBTLSE solution $X_{CT}$ is expressed as
	\begin{equation}\label{5.14}
		\mathit{k}^{rel}_{c}(X_{CT},J,K)=\|H_c G_c Z_c\|_2 \frac{\|[J_c, K_c]\|_F}{\|X_{CT}\|_F},
	\end{equation}
	where
	\begin{eqnarray*}
		H_c &=&  \left(\acute{V}_{22}^{-H} \otimes W_{1c}^{-H}\right) \Pi_{(d,n)}, \\
		G_c &=& \left(\left(S_c^2 \otimes I_d\right) - \begin{bmatrix}
			0_{2p} &  0\\
			0      & I_{n-2p}
		\end{bmatrix} \otimes (\grave{\Sigma}_2^H \grave{\Sigma}_2)\right)^{-1} \left[I_n \otimes \grave{\Sigma}_2^H, \; S_c \otimes I_d\right], \\
		Z_c &=& \diag \left(\begin{bmatrix}
			0_{2p}  & 0 \\
			0 & I_{n-2p}
		\end{bmatrix} \otimes \left(\grave{U}_2^H Q_c^H\right), \; \begin{bmatrix} 
			I_{2p} & 0 \\
			-\grave{U}_1^H (\grave{P}\grave{S}^{\dagger})\grave{U}_s & I_{n-2p}
		\end{bmatrix} \otimes I_d\right).
	\end{eqnarray*}
\end{theorem}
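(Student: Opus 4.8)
The plan is to reproduce, in the complex representation, the same three–stage reduction that established Theorem~\ref{thm5.1}. \emph{Reduction to the complex TLSE problem.} By Theorem~\ref{thm4.1}, $X_{CT}$ is a complex RBTLSE solution of \eqref{eq4.3} if and only if it is a complex TLSE solution of \eqref{eq4.4}; applying the same theorem to the perturbed data \eqref{eq5.5}, the matrix $\widehat{X}_{CT}$ is the complex TLSE solution of the perturbed complex TLSE problem
\begin{equation*}
	\min_{X,\tilde{I},\tilde{J}}\|[\tilde{I},\tilde{J}]\|_F ~ \textrm{subject to} ~ (\widehat{A}^C_c+\tilde{I})X = \widehat{B}^C_c+\tilde{J}, ~ \widehat{C}^C_cX=\widehat{D}^C_c.
\end{equation*}
Hence $\Delta X_{CT}$ is precisely the difference of the complex TLSE solutions of \eqref{eq4.4} and of this perturbed problem, with data and perturbation blocks $J_c,K_c,\Delta J_c,\Delta K_c$ as defined in \eqref{eq5.12}.

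\emph{Matching the perturbation magnitudes.} I would then verify the identities
\begin{equation*}
	\|[\Delta J,\Delta K]\|_F = \|[\Delta J_c,\Delta K_c]\|_F, \qquad \|[J,K]\|_F = \|[J_c,K_c]\|_F.
\end{equation*}
Lemma~\ref{lem22} gives $\|[P,Q]\|_F = \|[P^C_c,Q^C_c]\|_F$ for any RB matrices with matching row counts. Writing $C=R_1+R_2\j$, $A=M_1+M_2\j$ with $R_i\in\C^{p\times n}$, $M_i\in\C^{m\times n}$ (and analogously for $D$ and $B$), the block column $J_c$ of \eqref{eq5.12} stacks the blocks in the order $R_1,R_2,M_1,M_2$, whereas $J^C_c$ stacks them as $R_1,M_1,R_2,M_2$; these differ by a fixed row permutation depending only on $p$ and $m$, which leaves the Frobenius norm unchanged, and the very same permutation relates $K_c$ to $K^C_c$, $\Delta J_c$ to $(\Delta J)^C_c$, and $\Delta K_c$ to $(\Delta K)^C_c$. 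Substituting these equalities into the definition \eqref{eq5.11} shows that $\mathit{k}^{rel}_c(X_{CT},J,K)$ equals the relative normwise condition number of the complex TLSE solution of \eqref{eq4.4} measured against perturbations of the augmented data $[J_c,K_c]$.

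\emph{Invoking the TLSE condition number formula.} Under the hypotheses inherited from Theorem~\ref{thm4.2} (namely $\grave{\sigma}_{n-2p}>\grave{\sigma}_{n-2p+1}$ and $\acute{V}_{22}$ invertible), the complex TLSE solution map is well defined and Fr\'echet differentiable near the data, so its condition number is the operator norm of that derivative. Applying the closed-form expression for this quantity --- the complex counterpart of Theorem~4.2 of \cite{MR4371085}, obtained by repeating its derivation with the conjugate transpose $\cdot^H$ in place of $\cdot^T$ and with the representation dimensions $2m$, $2p$ replacing $4m$, $4p$ --- yields exactly \eqref{5.14} with $H_c$, $G_c$, $Z_c$ as stated, where the skinny SVD $\grave{S}=\grave{U}_s\grave{S}_s\grave{V}_s^H$ and the quantities $\grave{U},\grave{\Sigma},\grave{V},\acute{V}$ are those of Theorem~\ref{thm4.2}.

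The step I expect to be the main obstacle is the last one: confirming that the TLSE perturbation formula of \cite{MR4371085}, which is derived over $\mathbb{R}$, carries over verbatim to the complex field. Its proof rests on the normal equations characterising the rank-deficient constrained TLS approximation, on an implicit-function-theorem differentiation of the SVD-based solution map, and on the Kronecker/vectorization identities for $\vec$ (in particular $\vec(M^T)=\Pi_{(d,n)}\vec(M)$, which is pure index bookkeeping and hence field-independent); each of these remains valid over $\mathbb{C}$ once $\cdot^T$ is replaced by $\cdot^H$, and --- crucially --- the operator norm of the resulting real-linear Fr\'echet derivative is still given by $\|H_cG_cZ_c\|_2$, with no spurious factor of $2$ or $\sqrt{2}$ intruding. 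Establishing this field-independence is the crux; once it is in hand, the remaining algebra is a direct transcription of the proof of Theorem~\ref{thm5.1}.
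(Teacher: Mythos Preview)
Your proposal is correct and follows essentially the same approach as the paper, whose proof of Theorem~\ref{thm5.2} consists of the single sentence ``The proof follows a similar structure to that of Theorem~\ref{thm5.1}.'' In fact you supply more detail than the paper does: your observation that $J_c$ and $J^C_c$ differ only by a fixed row permutation (so that Lemma~\ref{lem22} still yields the needed norm identities) and your explicit acknowledgment that the cited TLSE condition-number formula from \cite{MR4371085} must be transported from $\mathbb{R}$ to $\mathbb{C}$ are both points the paper leaves implicit.
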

\begin{proof}
	The proof follows a similar structure to that of Theorem \ref{thm5.1}.
\end{proof}
The first-order upper bound on the relative forward error for the complex RBTLSE solution is given by
\begin{equation}\label{eq5.15}
	\frac{\| \Delta X_{CT} \|_F}{\|X_{CT} \|_F} \leq \mathit{k}^{rel}_{c}(X_{CT},J,K) \varepsilon_n \equiv U_c.
\end{equation}
Here, $U_c$ denotes the first-order upper bound on the relative error for the complex solution. This inequality provides a practical tool for estimating the maximum impact of data perturbations on complex solution accuracy.

\section{Numerical Verification}\label{sec6}
This section develops numerical algorithms for obtaining special solutions to the RBTLSE problems, drawing on the concepts discussed earlier. Numerical examples are also included to demonstrate the validity of the proposed methods.
\begin{algorithm} [H]
	\caption{For real solution of RBTLSE problem}\label{alg3}
	\textbf{Input:} $A= A_0+ A_1\i+ A_2\j+ A_3\k \in \QR^{m \times n}$, $B= B_0+ B_1\i+ B_2\j+ B_3\k \in \QR^{m \times d}$, $C= C_0+ C_1\i+ C_2\j+ C_3\k \in  \QR^{p \times n}$, and $D= D_0+ D_1\i+ D_2\j+ D_3\k \in \QR^{p \times d}.$ \\
	\textbf{Assumptions:}  $C^R_c$ possesses full row rank and $m \geq n+d$.\\
	\textbf{Output:} Real solution $X_{RT}$ to the RBTLSE problem.
	\begin{enumerate}
		\item Construct the augmented matrices $\hat{P} = [A^R_c, B^R_c]$ and $\hat{S} = [C^R_c, D^R_c]$.
		\item Compute the QR factorization of $\hat{S}^T$ as specified in equation \eqref{eqth1}.
		\item Perform thin SVD of matrix $\hat{P}\hat{Q}_2$ as defined in equation \eqref{eq23}.
		\item Calculate the partitioned matrix $\check{V}$ according to equation \eqref{eq24.1}.
		\item Check whether $\hat{\sigma}_{n-4p} > \hat{\sigma}_{n-4p+1}$ and verify that $\check{V}_{22}$ is invertible; if so, compute the real RBTLSE solution using equation \eqref{eq26}.
\end{enumerate}
	\vspace{-0.2cm}
\end{algorithm}

\begin{algorithm} [H]
	\caption{For complex solution of RBTLSE problem}\label{alg1}
	\textbf{Input:} $A= M_1+ M_2\j \in \QR^{m \times n}$, $B= N_1+ N_2\j \in \QR^{m \times d}$, $C= R_1+ R_2\j \in  \QR^{p \times n}$, and $D= S_1+ S_2\j \in \QR^{p \times d}.$ \\
	\textbf{Assumptions:} $C^C_c$ possesses full row rank and $m \geq n+d$.\\
	\textbf{Output:} Complex solution $X_{CT}$ to the RBTLSE problem.
	\begin{enumerate}
		\item Construct the augmented matrices $\grave{P} = [A^C_c, B^C_c]$ and $\grave{S} = [C^C_c, D^C_c]$.
		\item Compute the QR factorization of $\grave{S}^H$ as specified in equation \eqref{4.15}.
		\item Perform thin SVD of matrix $\grave{P}\grave{Q}_2$ as defined in equation \eqref{eq4.16}.
		\item Calculate the partitioned matrix $\acute{V}$ according to equation \eqref{eq4.19}.
		\item Check whether $\grave{\sigma}_{n-2p} > \grave{\sigma}_{n-2p+1}$ and verify that $\acute{V}_{22}$ is invertible; if so, compute the complex RBTLSE solution using equation \eqref{comsol}.
     \end{enumerate}
	\vspace{-0.2cm}
\end{algorithm}
The following examples illustrate the performance of the developed algorithms. All experiments were conducted in MATLAB $R2021b$ on a machine with a $3.00$ GHz Intel Core $i7-9700$ processor and $16$ GB RAM.

 \begin{example} \label{ex2}
Let \( A, B, C, D \in \QR \) be reduced biquaternion matrices of sizes \( m \times n \), \( m \times d \), \( p \times n \), and \( p \times d \), respectively, with each component matrix generated randomly from \( \mathbb{R} \) using \texttt{randn}(\(\cdot\)). That is, each matrix is of the form \( Z = Z_0 + Z_1\i + Z_2\j + Z_3\k \) for appropriate dimensions.

We set dimensions as $m=30t$, $n=10t$, $p=2t$, and $d=2$, where $t$ is a positive integer parameter. Using these matrices as inputs to Algorithm \ref{alg3}, we calculate the real RBTLSE solution $X_{RT}$ to \eqref{eq3.3}.
	
	For the case where $t=1$, we obtain
	\begin{equation*}
		X_{RT} = \begin{bmatrix}
			5.6008 & -5.6050 \\
			2.9891 & -2.1700 \\
			-5.5439 & 4.7055 \\
			0.0606 & -8.8370 \\
			2.1532 & -0.4639 \\
			-0.6891 & 0.7625 \\
			2.2691 & -0.0285 \\
			1.1575 & -1.3651 \\
			-0.0055 & 0.0812 \\
			-0.0648 & 0.2982
		\end{bmatrix}.
	\end{equation*}
	To evaluate the accuracy of our solution, we compute two error metrics:
	\begin{equation*}
		\epsilon_{r1}=\norm{(A+\bar{E})X_{RT}-(B+\bar{F})}_F,  ~ \epsilon_{r2}=\norm{CX_{RT}-D}_F,
	\end{equation*}
	where $\bar{E}$ and $\bar{F}$ are defined as in equation \eqref{eq3.3}. The error values for different values of $t$ are presented in Table \ref{tab21}. The consistently small error values (all less than $10^{-13}$) across various dimensions demonstrate the effectiveness of Algorithm \ref{alg3}  in finding real RBTLSE solution.
\end{example}
\begin{table}[H]
	\begin{center}
		\begin{tabular}{ c  c c} 
			\toprule
			$t$ &$\epsilon_{r1}$  & $\epsilon_{r2}$\\
			\midrule
			$1$    & $1.0391 \times 10^{-13}$&            $9.5143 \times 10^{-15} $     \\
			$3$  & $3.3531 \times 10^{-13}$ &            $4.0017 \times 10^{-14} $        \\ 
			$5$  & $2.4956 \times 10^{-13}$& $4.7754 \times 10^{-14} $        \\ 
			$7$   & $2.9334\times 10^{-13}$&            $4.3344 \times 10^{-14} $          \\ 
			$9$  & $4.8715 \times 10^{-13}$               & $7.1455 \times 10^{-14} $       \\
			\bottomrule
		\end{tabular}
		\caption{Computational accuracy of Algorithm \ref{alg3}.} \label{tab21}
	\end{center}
\end{table}

\begin{example}\label{ex2.1}
	Let us examine the RBTLSE problem \eqref{eq3.3} with coefficient matrices $A$, $B$, $C$, $D$ as specified in Example \ref{ex2}. We investigate the sensitivity of the solution by introducing random perturbations $\Delta A$, $\Delta B$, $\Delta C$, and $\Delta D$ to these matrices. Our objective is to analyze how these perturbations affect the real RBTLSE solution $X_{RT}$. We quantify the exact relative forward error as $\frac{\|\Delta X_{RT}\|_F}{\|X_{RT}\|_F}$ and compare it with the theoretical upper bound $U_r$ derived from equation \eqref{eq5.10}.
	
	To systematically evaluate the error behavior, Table \ref{tab2.1} presents three key metrics: the magnitude of perturbations $\|[\Delta J, \Delta K]\|_F$, the resulting relative forward error $\frac{\|\Delta X_{RT}\|_F}{\|X_{RT}\|_F}$, and the theoretical upper bound $U_r$. These metrics are computed for various values of parameter $t$ and different perturbation magnitudes to provide comprehensive insight into the solution's sensitivity.
\end{example}
\begin{table}[h]
	\centering
	\small
	\begin{tabular}{c@{\hskip 1.2cm}c@{\hskip 1.2cm}c@{\hskip 1.2cm}c}
		\toprule
		$t$ & $\|[\Delta J, \Delta K]\|_F$ & $\frac{\|\Delta X_{RT}\|_F}{\|X_{RT}\|_F}$ & $U_r$ \\
		\midrule
		1  & $2.2703\text{e}{-11}$ & $4.6231\text{e}{-12}$ & $8.1719\text{e}{-11}$ \\
		& $5.8686\text{e}{-09}$ & $6.3442\text{e}{-09}$ & $1.0240\text{e}{-07}$ \\
		& $1.3514\text{e}{-07}$ & $1.1387\text{e}{-06}$ & $8.1158\text{e}{-06}$ \\
		\midrule
		3  & $8.1092\text{e}{-10}$ & $1.3748\text{e}{-09}$ & $1.4113\text{e}{-08}$ \\
		& $1.1979\text{e}{-08}$ & $1.7344\text{e}{-07}$ & $3.2495\text{e}{-06}$ \\
		& $4.2278\text{e}{-06}$ & $1.4833\text{e}{-06}$ & $8.6862\text{e}{-05}$ \\
		\midrule
		5  & $4.9506\text{e}{-12}$ & $2.5434\text{e}{-12}$ & $8.4246\text{e}{-10}$ \\
		& $1.2937\text{e}{-09}$ & $1.7800\text{e}{-09}$ & $2.3038\text{e}{-08}$ \\
		& $2.7633\text{e}{-05}$ & $6.0400\text{e}{-05}$ & $4.9883\text{e}{-04}$ \\
		\midrule
		7  & $2.0329\text{e}{-11}$ & $3.0678\text{e}{-11}$ & $1.0081\text{e}{-09}$ \\
		& $4.0097\text{e}{-10}$ & $1.7030\text{e}{-09}$ & $2.4548\text{e}{-08}$ \\
		& $3.6009\text{e}{-06}$ & $5.9840\text{e}{-06}$ & $4.1659\text{e}{-04}$ \\
		\midrule
		9  & $2.3034\text{e}{-11}$ & $1.4516\text{e}{-11}$ & $5.3757\text{e}{-10}$ \\
		& $2.5079\text{e}{-07}$ & $2.5897\text{e}{-07}$ & $7.0505\text{e}{-06}$ \\
		& $1.0170\text{e}{-05}$ & $5.3356\text{e}{-06}$ & $2.2537\text{e}{-04}$ \\
		\bottomrule
	\end{tabular}
	\caption{Comparison of relative forward error and upper bounds for the perturbed problem.}
	\label{tab2.1}
\end{table}
Table \ref{tab2.1} illustrates that the actual relative forward errors consistently remain below their theoretical upper bounds $U_r$ regardless of perturbation size or parameter value. This empirical evidence confirms the validity of our error bound analysis and its practical utility in predicting real RBTLSE solution sensitivity. 

\begin{example} \label{ex1}
Let \( A \in \QR^{m \times n} \), \( B \in \QR^{m \times d} \), \( C \in \QR^{p \times n} \), and \( D \in \QR^{p \times d} \) be reduced biquaternion matrices, where each matrix is of the form \( Z = Z_1 + Z_2\j \), with \( Z_1, Z_2 \in \C^{s \times t} \) having complex entries generated using \texttt{rand}(\(\cdot\)) for appropriate dimensions.

We set dimensions as $m=50t$, $n=6t$, $p=2t$, and $d=3$, where $t$ is a positive integer parameter. Using these matrices as inputs to Algorithms \ref{alg1}, we calculate the complex RBTLSE solution $X_{CT}$ to \eqref{eq4.3}.

For the case where $t=1$, we obtain
	\begin{equation*}
	X_{CT} = \begin{bmatrix}
		2.5570-0.1634 \i & -1.0157+0.1433\i & 0.1109+0.9360\i \\
		0.2996-1.8524\i & 0.7409-1.6943\i & -0.9319-0.2281\i \\
		0.6571-0.2063\i & -0.8987+0.9218\i & 0.6333+0.4526\i \\
		0.3458+1.0246\i & 0.8121+0.3662\i & 0.0481-0.5039\i \\
		-2.1026+0.7757\i & 0.1158-0.4280\i & -0.0486+0.2270\i \\
		-0.8987+0.8807\i & 1.6070+0.8080\i & 1.2279-1.0744\i
	\end{bmatrix}.
\end{equation*}
To evaluate the accuracy of our solution, we compute two error metrics:	
\begin{equation*}
\epsilon_{c1}=\norm{(A+\bar{G})X_{CT}-(B+\bar{H})}_F, ~ \epsilon_{c2}=\norm{CX_{CT}-D}_F,
\end{equation*}
	where $\bar{G}$ and $\bar{H}$ are defined as in equation \eqref{eq4.3}. The error values for different values of $t$ are presented in Table \ref{tab11}.  The consistently small error values (all less than $10^{-13}$) across various dimensions demonstrate the effectiveness of Algorithm \ref{alg1} in finding complex solutions to the RBTLSE problem.
\end{example}
\begin{table}[H]
	\begin{center}
		\begin{tabular}{ c  c c} 
			\toprule
			$t$ &$\epsilon_{c1} $  & $\epsilon_{c2}$\\
			\midrule
			$1$     & $3.4351 \times 10^{-14}$&            $4.1297 \times 10^{-15} $     \\
			$3$    & $2.5537 \times 10^{-13}$ &            $2.2070 \times 10^{-14} $        \\ 
			$5$  & $9.2360 \times 10^{-14}$& $1.0560 \times 10^{-14} $        \\ 
			$7$    & $5.7324 \times 10^{-14}$&            $5.1796 \times 10^{-14} $          \\ 
			$9$   & $6.4804 \times 10^{-13}$               & $1.2717 \times 10^{-13} $       \\
			\bottomrule
		\end{tabular}
		\caption{Computational accuracy of Algorithm \ref{alg1}.} \label{tab11}
	\end{center}
\end{table}

\begin{example}\label{ex1.1}
	Let us examine the RBTLSE problem \eqref{eq4.3} with coefficient matrices $A$, $B$, $C$, $D$ as specified in Example \ref{ex1}. We investigate the sensitivity of the solution by introducing random perturbations $\Delta A$, $\Delta B$, $\Delta C$, and $\Delta D$ to these matrices. Our objective is to analyze how these perturbations affect the complex RBTLSE solution $X_{CT}$. We quantify the exact relative forward error as $\frac{\|\Delta X_{CT}\|_F}{\|X_{CT}\|_F}$ and compare it with the theoretical upper bound $U_c$ derived from equation \eqref{eq5.15}.
	
	To systematically evaluate the error behavior, Table \ref{tab1.11} presents three key metrics: the magnitude of perturbations $\|[\Delta J, \Delta K]\|_F$, the resulting relative forward error $\frac{\|\Delta X_{CT}\|_F}{\|X_{CT}\|_F}$, and the theoretical upper bound $U_c$. These metrics are computed for various values of parameter $t$ and different perturbation magnitudes to provide comprehensive insight into the solution's sensitivity.
\end{example}
\begin{table}[h]
	\centering
	\small
	\begin{tabular}{c@{\hskip 1.2cm}c@{\hskip 1.2cm}c@{\hskip 1.2cm}c}
		\toprule
		$t$ & $\|[\Delta J, \Delta K]\|_F$ & $\frac{\|\Delta X_{CT}\|_F}{\|X_{CT}\|_F}$ & $U_c$ \\
		\midrule
		1  & $2.4689\text{e}{-14}$ & $3.0408\text{e}{-14}$ & $2.6618\text{e}{-13}$ \\
		& $1.2245\text{e}{-09}$ & $6.9055\text{e}{-10}$ & $7.4350\text{e}{-09}$ \\
		& $1.5817\text{e}{-05}$ & $1.6792\text{e}{-05}$ & $1.5182\text{e}{-04}$ \\
		\midrule
		3  & $1.7399\text{e}{-13}$ & $2.8634\text{e}{-13}$ & $5.1241\text{e}{-12}$ \\
		& $3.0086\text{e}{-10}$ & $8.1548\text{e}{-11}$ & $3.5047\text{e}{-09}$ \\
		& $2.4039\text{e}{-06}$ & $5.5996\text{e}{-06}$ & $2.2141\text{e}{-05}$ \\
		\midrule
		5  & $3.0605\text{e}{-12}$ & $1.0771\text{e}{-11}$ & $8.6044\text{e}{-11}$ \\
		& $2.2956\text{e}{-09}$ & $5.2550\text{e}{-10}$ & $9.4879\text{e}{-09}$ \\
		& $5.0140\text{e}{-06}$ & $1.0008\text{e}{-06}$ & $3.6651\text{e}{-05}$ \\
		\midrule
		7  & $1.8635\text{e}{-13}$ & $1.3509\text{e}{-13}$ & $1.7363\text{e}{-12}$ \\
		& $1.6230\text{e}{-11}$ & $1.9775\text{e}{-11}$ & $2.1878\text{e}{-10}$ \\
		& $1.8778\text{e}{-07}$ & $1.9025\text{e}{-07}$ & $4.1458\text{e}{-06}$ \\
		\midrule
		9  & $4.9001\text{e}{-12}$ & $1.6812\text{e}{-11}$ & $8.7683\text{e}{-11}$ \\
		& $1.8008\text{e}{-09}$ & $1.9193\text{e}{-09}$ & $4.3635\text{e}{-08}$ \\
		& $2.9757\text{e}{-07}$ & $1.5750\text{e}{-07}$ & $5.1066\text{e}{-06}$ \\
		\bottomrule
	\end{tabular}
	\caption{Comparison of relative forward error and upper bounds for the perturbed problem.}
	\label{tab1.11}
\end{table}
Table \ref{tab1.11} illustrates that the actual relative forward errors consistently remain below their theoretical upper bounds $U_c$ regardless of perturbation size or parameter value. This empirical evidence confirms the validity of our error bound analysis and its practical utility in predicting complex RBTLSE solution sensitivity. 

\begin{example} \label{ex5.4}
	This example illustrates the performance of the proposed RBTLSE method compared with the existing RBLSE method~\cite{ahmad2024solutions} in solving inconsistent equality-constrained reduced biquaternion matrix equations (RBMEs). 
	
	Specifically, we generate exact real and complex solutions to consistent equality-constrained RBMEs. Perturbations are then introduced to simulate inconsistencies, and we compute approximate solutions using both RBTLSE and RBLSE methods. The quality of the computed solutions is assessed using the Frobenius norm error with respect to the original exact solution. 
	
	The testing is carried out under two scenarios for each setting:  
	\begin{itemize}
	\item perturbations in both the coefficient and right-hand side matrices, and  
	\item perturbations only in the right-hand side matrix.  
	\end{itemize}
	The errors are recorded for increasing problem sizes to compare the performance of the two methods.
	\begin{enumerate}
		\item[\rm (i)] \textbf{Real solutions:}
		
		Let $E= E_0+E_1\i+E_2\j+E_3\k \in \QR^{m \times 50}$ with $(m>50)$, where each component matrix is defined as $E_i= \texttt{randn}(m,50) \in \R^{m \times 50}$. Similarly, define $C= C_0+C_1\i+C_2\j+C_3\k \in \QR^{10 \times 50}$, where $C_i= \texttt{randn}(10,50) \in \R^{10 \times 50}$ for $i=0,1,2,3$. Let $X_1$ be a randomly generated real matrix given by $X_1 = \texttt{randn}(50,35) \in \R^{50 \times 35}$.
		
		Using these matrices, we construct $F=E_0X_1+E_1X_1\i+E_2X_1\j+E_3X_1\k$ and $D=C_0X_1+C_1X_1\i+C_2X_1\j+C_3X_1\k$. Clearly, the equality-constrained RBME
		\begin{equation*}
			EX = F, ~ \textrm{subject to} ~ CX=D
		\end{equation*}
		is consistent, with $X_1$ being its exact real solution.
		
		To evaluate solution accuracy under inconsistencies, we perturb $E$ and $F$ to obtain:
		\[
		AX \approx B, ~ \text{subject to} ~ CX = D,
		\]
		where $A = E + dE$, $B = F + dF$, and $X \in \R^{50 \times 35}$.
		
		\textbf{Case 1:} Perturbations in both $E$ and $F$.
		
		Generate $G = \texttt{rand}(85,85)$ and set $H = 0.01 \cdot \texttt{rand}(m,85) \cdot G$. Define
		\begin{align*}
			dE &= H(:,1\!:\!50) + H(:,1\!:\!50)\i + H(:,1\!:\!50)\j + H(:,1\!:\!50)\k, \\
			dF &= H(:,51\!:\!85) + H(:,51\!:\!85)\i + H(:,51\!:\!85)\j + H(:,51\!:\!85)\k.
		\end{align*}
		
		\textbf{Case 2:} Perturbation only in $F$.
		
		Generate $G = \texttt{rand}(35,35)$ and set $H = 0.01 \cdot \texttt{rand}(m,35) \cdot G$. Define
		\begin{align*}
			dE &= 0, \\
			dF &= H + H\i + H\j + H\k.
		\end{align*}
		
		In both cases, we compute approximate real solutions $X_{RT}$ and $X_{RL}$ using the RBTLSE (Algorithm~\ref{alg3}) and RBLSE methods~\cite[Algorithm $1$]{ahmad2024solutions}, respectively.
		
		\item[\rm (ii)] \textbf{Complex solutions:}
		
		Let $M=M_1+M_2\mathbf{j} \in \QR^{m \times 50}$ with $(m>50)$, where each component matrix is defined as $M_i=\texttt{randn}(m,50)+\texttt{randn}(m,50)\mathbf{i}$ for $i=1,2$. Similarly, define $R= R_1+R_2\mathbf{j} \in \QR^{10 \times 50}$, where $R_i=\texttt{randn}(10,50)+\texttt{randn}(10,50)\mathbf{i}$ for $i=1,2$. Let $X_2$ be a randomly generated complex matrix given by $X_2 = \texttt{randn}(50,35)+\texttt{randn}(50,35)\mathbf{i} \in \mathbb{C}^{50 \times 35}$. 
		
		Using these matrices, we construct $N=M_1X_2+M_2X_2 \j$ and $S=R_1X_2+R_2X_2 \j$. Clearly, the equality-constrained RBME
		\begin{equation*}
			MX = N, ~ \textrm{subject to} ~ RX=S
		\end{equation*}
		is consistent, with $X_2$ being its exact complex solution.
		
		Perturb $M$ and $N$ to obtain the inconsistent system:
		\[
		PX \approx Q, ~ \text{subject to} ~ RX = S,
		\]
		where $P = M + dM$, $Q = N + dN$, and $X \in \C^{50 \times 35}$.
		
		\textbf{Case 1:} Perturbations in both $M$ and $N$.
		
		Generate $T = \texttt{rand}(85,85) + \texttt{rand}(85,85)\i$, $I = \texttt{rand}(m,85) + \texttt{rand}(m,85)\i$, and set $J = 0.01 \cdot IT$. Define
		\begin{align*}
			dM &= J(:,1\!:\!50) + J(:,1\!:\!50)\j, \\
			dN &= J(:,51\!:\!85) + J(:,51\!:\!85)\j.
		\end{align*}
		
		\textbf{Case 2:} Perturbation only in $N$.
		
		Generate $T = \texttt{rand}(35,35) + \texttt{rand}(35,35)\i$, $I = \texttt{rand}(m,35) + \texttt{rand}(m,35)\i$, and set $J = 0.01 \cdot IT$. Define
		\begin{align*}
			dM &= 0, \\
			dN &= J + J\j.
		\end{align*}
		
		For both scenarios, we compute the complex solutions $X_{CT}$ and $X_{CL}$ using the RBTLSE (Algorithm~\ref{alg1}) and RBLSE methods~\cite[Algorithm $2$]{ahmad2024solutions}, respectively.
	\end{enumerate}
	
	\textbf{Note:} $X_1$ and $X_2$ represent the most accurate real and complex solutions for the original consistent systems. For evaluation, we define the error metrics:
	\[
	\epsilon_{RT} = \|X_{RT} - X_1\|_F, ~
	\epsilon_{RL} = \|X_{RL} - X_1\|_F, ~
	\epsilon_{CT} = \|X_{CT} - X_2\|_F, ~
	\epsilon_{CL} = \|X_{CL} - X_2\|_F.
	\]
	We report these metrics for varying values of $m$ by averaging results over 20 independent trials for each $m$.
\end{example}

Figure~\ref{fig12} compares the errors $\epsilon_{RT}$ and $\epsilon_{RL}$ across various values of $m$ for both perturbation cases. In \textbf{Case~$\mathbf{1}$}, $\epsilon_{RT}$ is consistently lower than $\epsilon_{RL}$, whereas in \textbf{Case~$\mathbf{2}$}, $\epsilon_{RL}$ yields smaller errors.

Similarly, Figure~\ref{fig11} presents the comparison between $\epsilon_{CT}$ and $\epsilon_{CL}$. For \textbf{Case~$\mathbf{1}$}, $\epsilon_{CT}$ outperforms $\epsilon_{CL}$ across all $m$, while in \textbf{Case~$\mathbf{2}$}, $\epsilon_{CL}$ consistently achieves lower error than $\epsilon_{CT}$.
	\begin{figure}[H]
		\centering
		\subfigure[]{\includegraphics[width=0.51\textwidth]{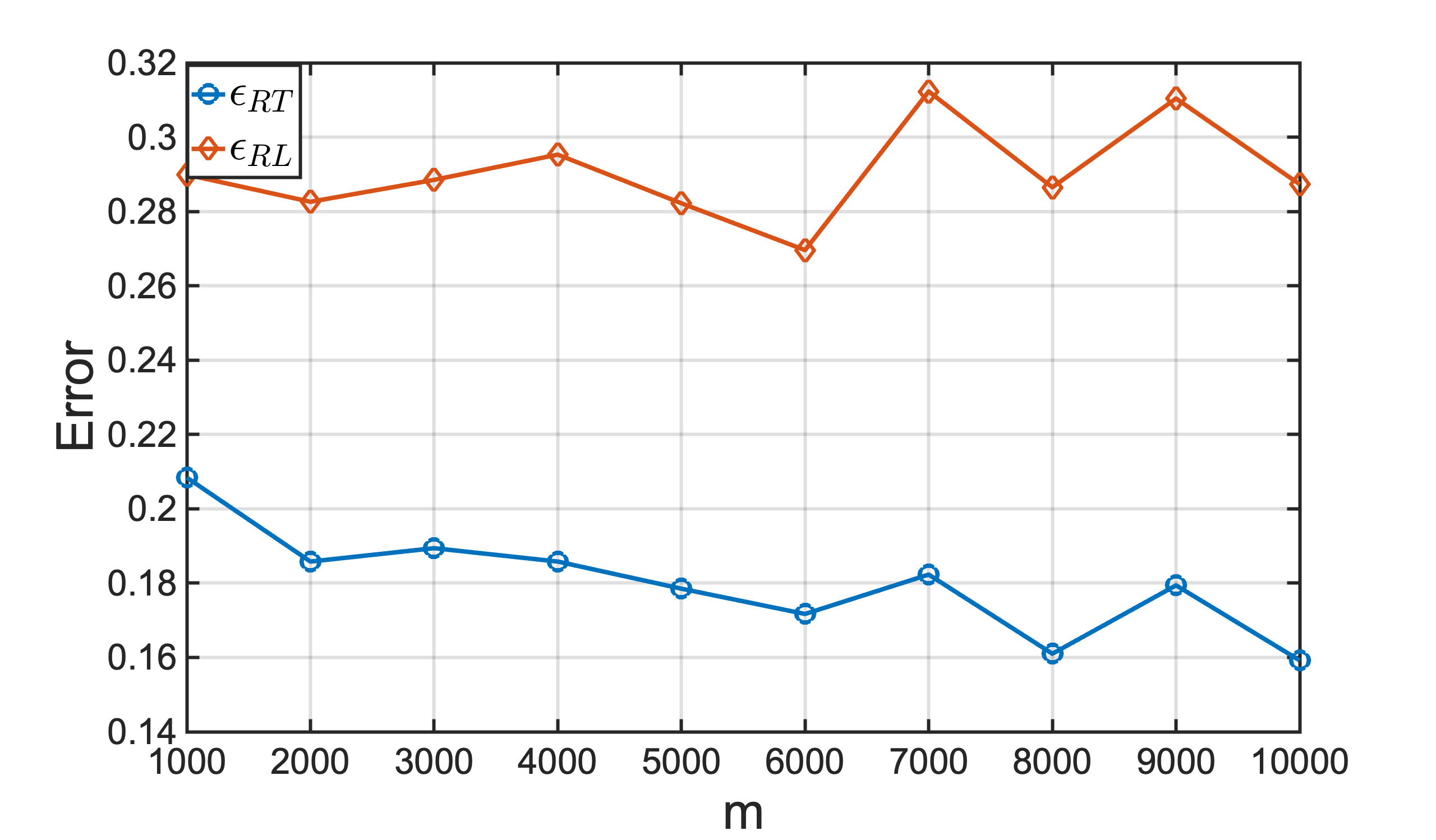}} 
		\hspace{-0.7cm}
		\subfigure[]{\includegraphics[width=0.51\textwidth]{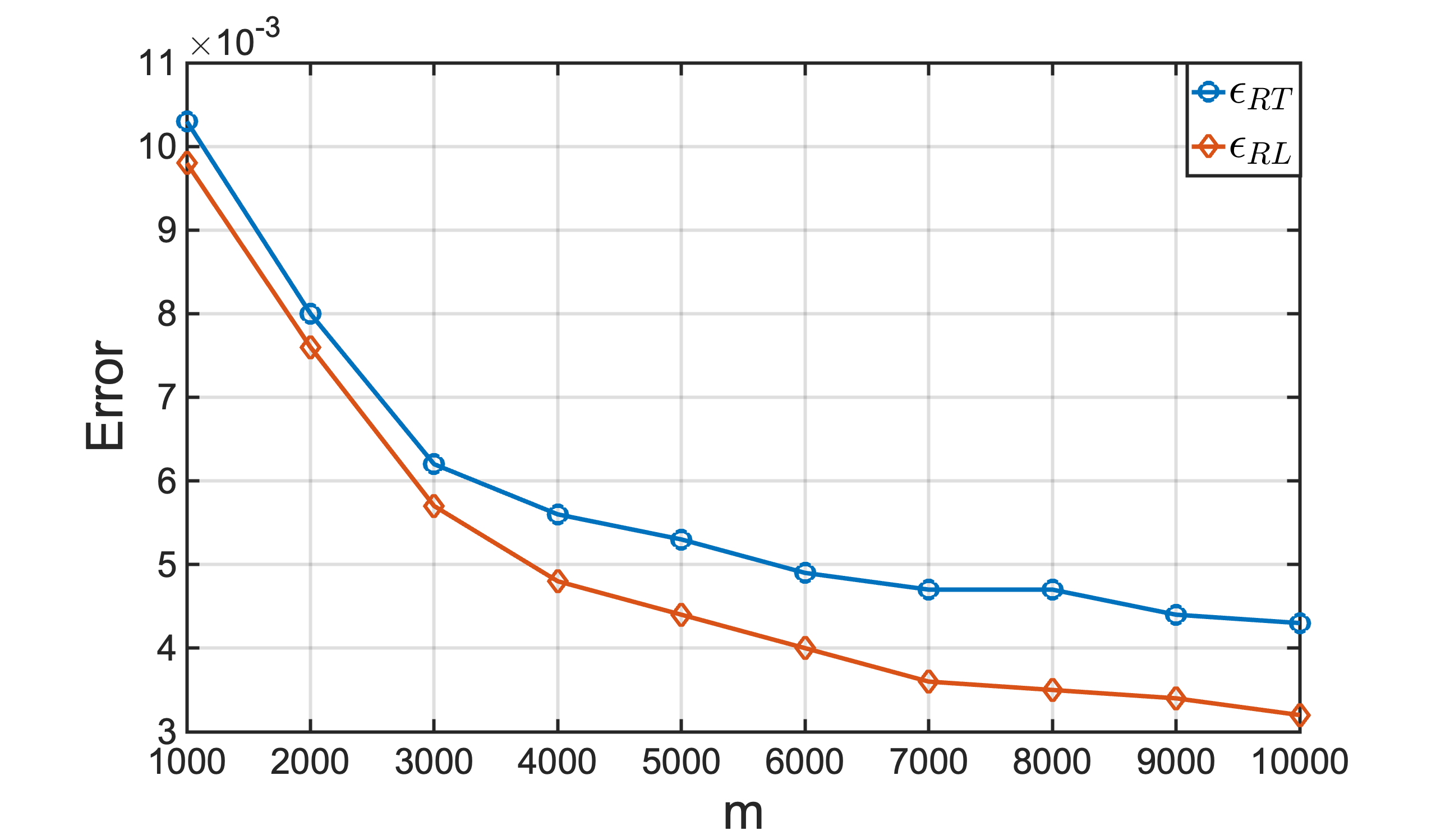}} 
		\caption{Comparison of errors in computing real solutions for the RBTLSE and RBLSE problems (a)  \textbf{Case} $\mathbf{1}$ (b)  \textbf{Case} $\mathbf{2}$}
		\label{fig12}
	\end{figure}

	\begin{figure}[H]
	\centering
	\subfigure[]{\includegraphics[width=0.51\textwidth]{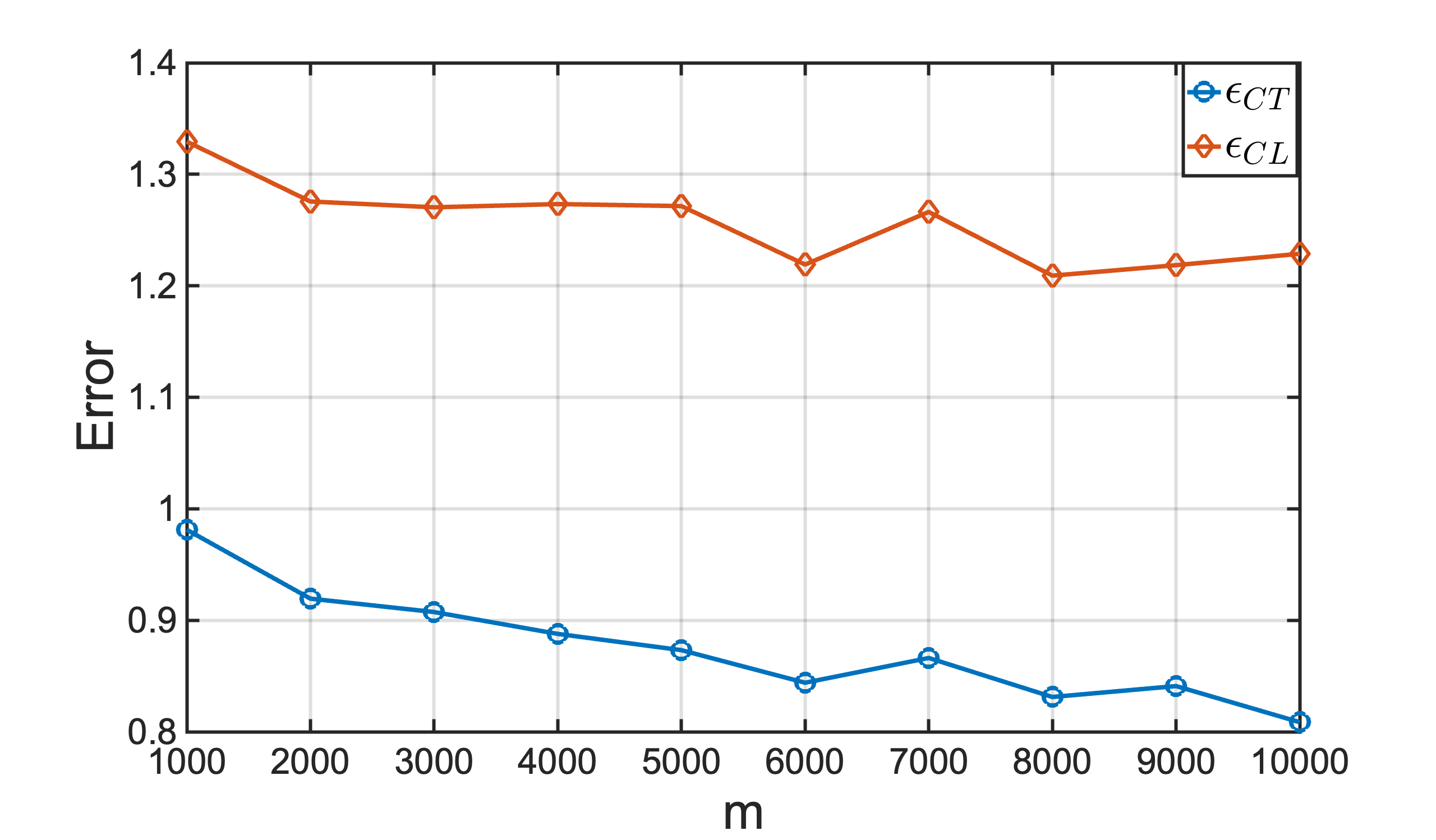}} 
	\hspace{-0.7cm}
	\subfigure[]{\includegraphics[width=0.51\textwidth]{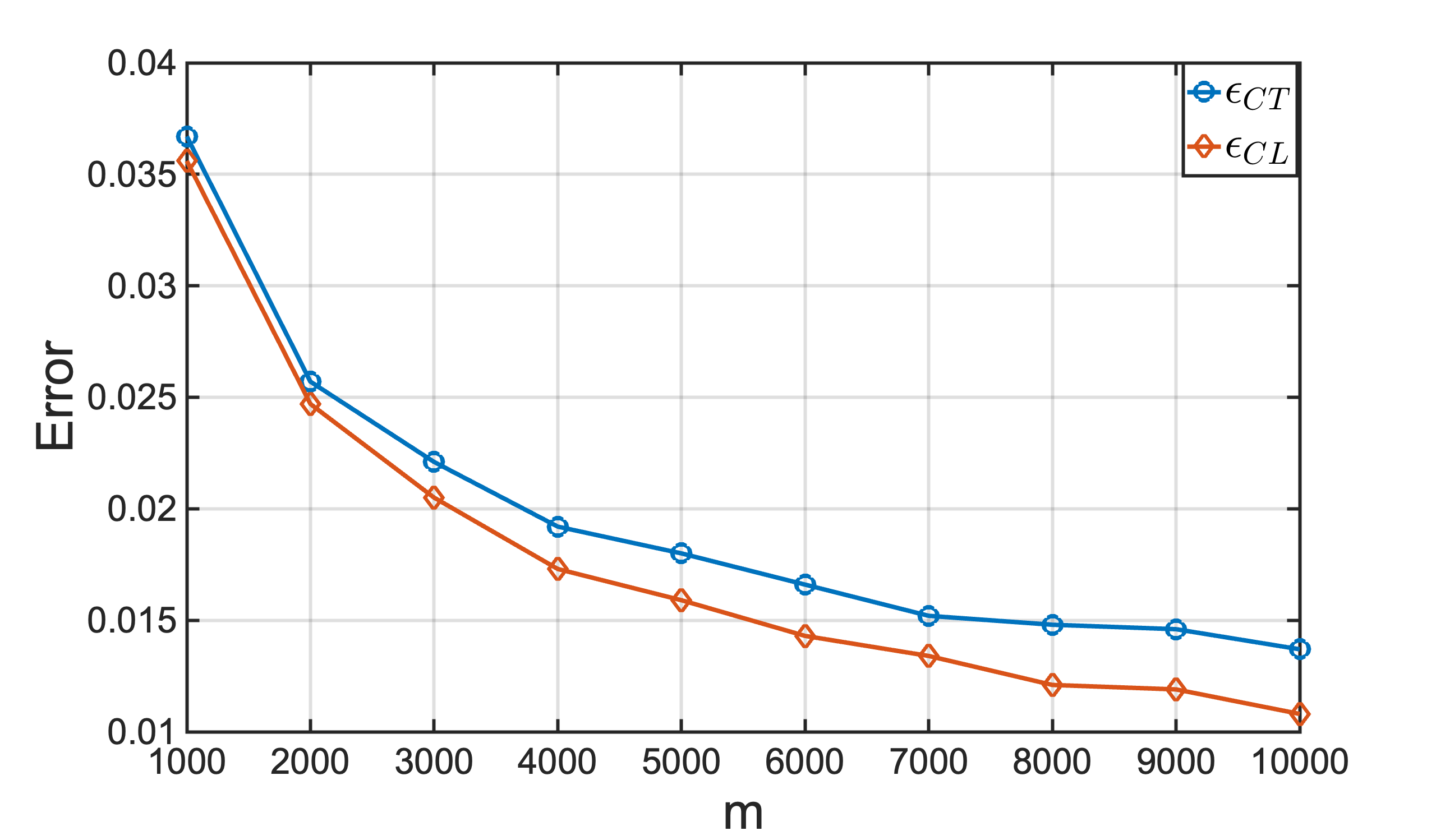}} 
	\caption{Comparison of errors in computing complex solutions for the RBTLSE and RBLSE problems (a)  \textbf{Case} $\mathbf{1}$ (b)  \textbf{Case} $\mathbf{2}$}
	\label{fig11}
\end{figure}
The following remark summarizes the key point from this example:
	\begin{remark}
In the presence of noise, equality-constrained RBMEs may become inconsistent, necessitating approximate solution methods. The RBTLSE approach is better suited for scenarios where errors affect both the coefficient matrix and the right-hand side, whereas the RBLSE method is more effective when errors are confined to the right-hand side. This distinction emphasizes the need to select an appropriate solution strategy based on the type and location of errors.
	\end{remark}

	\section{Conclusions} \label{sec7}
	In this paper, we developed a comprehensive framework for addressing the RBTLSE problem. We identified the conditions under which real and complex solutions can be obtained and developed corresponding solution methods using real and complex representations of RB matrices.
	
	To assess the sensitivity and stability of the obtained solutions, we provided analytical bounds for the associated forward error. These theoretical results were validated through numerical examples, which demonstrated the effectiveness and robustness of the proposed RBTLSE approach compared to traditional RBLSE approach, especially in scenarios where both sides of the system are contaminated by noise.
	
	In future research, we aim to investigate purely imaginary and reduced biquaternion solutions to the RBTLSE problem. Additionally, we plan to explore the application of these special solutions in signal and image processing tasks.

\section*{Funding}
No funding was received for this study.

\section*{Conflict of Interest}
The author declare no potential conflict of interest.

\section*{Data Availability}
No data were generated or analyzed during this study; therefore, data sharing is not applicable.

\section*{ORCID}
Neha Bhadala \orcidC \href{https://orcid.org/0009-0001-9249-0611}{ \hspace{2mm}\textcolor{lightblue}{https://orcid.org/0009-0001-9249-0611}} \\

\bibliographystyle{abbrv}
\bibliography{Reference1}
\end{document}